\newtheorem{theorem}{Theorem}[section]
\newtheorem{corollary}[theorem]{Corollary}
\newtheorem{proposition}[theorem]{Proposition}
\theoremstyle{definition}
\newtheorem{definition}[theorem]{Definition}
\newtheorem{setting}[theorem]{Setting}
\newtheorem{notation}[theorem]{Notation}
\newtheorem{remark}[theorem]{Remark}
\newtheorem{example}[theorem]{Example}
\theoremstyle{remark}
\numberwithin{equation}{section}
\newcommand{\height}{\mbox{height}\,}
\newcommand{\mfm}{\mbox{$\mathfrak{m}$}}
\newcommand{\mfa}{\mbox{$\mathfrak{a}$}}
\newcommand{\mfb}{\mbox{$\mathfrak{b}$}}
\newcommand{\mfc}{\mbox{$\mathfrak{c}$}}
\newcommand{\cO}{{\mathcal O}}
\newcommand{\lra}{{\longrightarrow}}
\newcommand{\bC}{\mathbb{C}}
\newcommand{\bZ}{{\mathbb{Z}}}
\newcommand{\bQ}{{\mathbb{Q}}}
\newcommand{\rad}{\mbox{\rm{rad}}\,}
\newcommand{\Min}{\mbox{\rm{Min}}}
\newcommand{\rt}{\mathrm{rt}}
\newcommand{\rn}{\mathrm{rn}}
\newcommand{\fzero}{\mbox{$\underline{\rm f_0t}$}}
\newcommand{\ftwo}{\mbox{$\underline{\rm f_2t}$}}
\newcommand{\fim}{\mbox{$\underline{\rm f_{i-1}t}$}}
\newcommand{\fit}{\mbox{$\underline{\rm f_it}$}}
\newcommand{\fip}{\mbox{$\underline{\rm f_{i+1}t}$}}
\newcommand{\fipp}{\mbox{$\underline{\rm f_{i+2}t}$}}
\newcommand{\fn}{\mbox{$\underline{\rm f_{n}t}$}}
\newcommand{\fnp}{\mbox{$\underline{\rm f_{n+1}t}$}}
\newcommand{\ft}{\mbox{$\underline{\rm ft}$}}
\newcommand{\f}{\mbox{$\underline{\rm f}$}}
\begin{document}

\title[Divisors of expected Jacobian type]{Divisors of expected
  Jacobian type}

\author[J. \`Alvarez Montaner]{Josep \`Alvarez Montaner}

\author[F. Planas-Vilanova]{Francesc Planas-Vilanova}

\address{Departament de Matem\`atiques, Universitat Polit\`ecnica de
  Catalunya. Diagonal 647, Barcelona} \email{Josep.Alvarez@upc.edu,
  Francesc.Planas@upc.edu}

\thanks{ Both authors are supported by the Spanish Ministerio de
  Econom\'ia y Competitividad MTM2015-69135-P and Generalitat de
  Catalunya SGR2017-932. }

\date{\today}



\begin{abstract}
Divisors whose Jacobian ideal is of linear type have received a lot of
attention recently because of its connections with the theory of
$D$-modules.  In this work we are interested on divisors of expected
Jacobian type, that is, divisors whose gradient ideal is of linear
type and the relation type of its Jacobian ideal coincides with the
reduction number with respect to the gradient ideal plus one. We
provide conditions in order to be able to describe precisely the
equations of the Rees algebra of the Jacobian ideal. We also relate
the relation type of the Jacobian ideal to some $D$-module theoretic
invariant given by the degree of the Kashiwara operator.
\end{abstract}

\maketitle

\section{Introduction}\label{Sec1}
Let $(X,O)$ be a germ of a smooth $n$-dimensional complex variety and
$\cO_{X,O}$ the ring of germs of holomorphic functions in a
neighbourhood of $O$, which we identify with $R=\bC\{x_1,\dots ,
x_n\}$ by taking local coordinates.  Let $D_R[s]$ be the polynomial
ring in an indeterminate $s$ with coefficients in the ring of
differential operators $D_R= R\langle \partial_1, \dots , \partial_n
\rangle$ where $\partial_i$ are the partial derivatives with respect
to the variables $x_i$.  To any hypersurface defined by $f\in R$ we
may attach several invariants coming from the theory of $D$-modules
that measure its singularities.  The goal of this work is to get more
insight on the {\it parametric annihilator} ${\rm
  Ann}_{D_R[s]}(\boldsymbol{f^s}):= \{ P(s) \in D_R[s] \hskip 2mm
| \hskip 2mm P(s) \cdot \boldsymbol{f^s}=0 \},$ where we understand
$\boldsymbol{f^s}$ as a formal symbol that takes the obvious meaning
$f^r$ when specializing to any integer $r\in \bZ$.  This is the
defining ideal of the $D_R[s]$-modules generated by
$\boldsymbol{f^s}$, that we denote as $D_R[s] \boldsymbol{f^s}$, which
plays a key role in the theory of {\it Bernstein-Sato polynomials} as
shown by Kashiwara \cite{Kas77}. Among the differential operators
annihilating $\boldsymbol{f^s}$ there exists the so-called {\it
  Kashiwara operator} \cite[Theorem 6.3]{Kas77} that has been used in
some of the first algorithmic approaches to the computation of
Bernstein-Sato polynomials given by Yano \cite{Yan78} and Brian\c{c}on
et al. \cite{BGMM}. Furthermore, the degree of the Kashiwara operator
is an interesting analytic invariant of the singularity, although it
is much coarser than the Bernstein-Sato polynomial itself.

\vskip 2mm

A common theme in the study of the parametric annihilator is whether
it is generated by operators of degree one.  This and some related
linearity properties have been used by several authors in a wide range
of different problems \cite{CN02}, \cite{Tor07}, \cite{Nar08},
\cite{CN09} , \cite{Arc10}, \cite{Nar15}, \cite{Wal15}, \cite{Wal17}.
This linearity property of differential operators can be checked using
algebraic methods as it was proved by Calder\'on-Moreno and
Narv\'ez-Macarro in \cite{CN02}. Namely, this property holds whenever
the Jacobian ideal of the hypersurface $f$ is of linear type, that is
the Rees algebra and the symmetric algebra of the Jacobian ideal
coincide.

\vskip 2mm

The aim of this paper is to get further connections between the Rees
algebra of the Jacobian ideal and the parametric annihilator.
Building upon work of Mui\~nos and the second author in \cite{MP12},
we introduce in Definition~\ref{defAlmost} the notion of divisors of
{\it expected Jacobian type} as those divisors whose gradient ideal is
of linear type and whose Jacobian ideal has relation type equal to its
reduction number plus one (see also Definition~\ref{defexp}). In
Remark~\ref{remJacImplies}, it is easily seen that this is a natural
generalization of the divisors of {\it linear Jacobian type}
considered in \cite{CN02} (see also \cite{Nar08}). For divisors of
linear Jacobian type we can find an equation that resembles the
initial term or symbol of the Kashiwara operator with respect to a
given order, and indeed this is the case under some extra conditions.

\vskip 2mm

The organization of the paper is as follows: in
Section~\ref{SecEquations} we review the basics on the equations of
Rees algebras and recover and extend some of the results of Mui\~nos
and the second author in \cite{MP12}.  Section \ref{SecDefExp} and
~\ref{SecDefAlm} are devoted to introduce the notion of ideal of expected relation type and its specialization to the case
of the Jacobian ideal of a hypersurface.
In Section~\ref{SecConnections} we describe the connection between
divisors of expected Jacobian type and the parametric annihilator.  We
relate the degree of the Kashiwara operator with the relation type of
the Jacobian ideal in Proposition \ref{Lf}.  In
Section~\ref{SecExamples} we present several examples in which we
explore the case in which the ideal has the expected relation type. We
also study some cases in which this condition is not satisfied.

\vskip 2mm

Any unexplained notation or definition can be found in \cite{BH93} or
\cite{SH}. Throghout the paper, $(R,\mfm)$ is a Noetherian local ring
and $\mfb\subseteq\mfa$ and $J\subseteq I$ are ideals of $R$.

\vskip 5mm

{ {\bf Acknowledgements:} This work grew up from early conversations with Ferran Mui\~nos and we are really grateful for his insight.
We would also thank Jos\'e Mar\'ia Giral for some helpful comments. Part of this work was done during a research stay of the first author at CIMAT, Guanajuato with a Salvador de Maradiaga grant (ref. PRX 19/00405) from the Ministerio de Ciencia, Innovaci\'on y Universidades.}

\section{On the equations of Rees algebras}\label{SecEquations}

Let $(R,\mfm)$ be a Noetherian local ring and let $\mfa=(f_1,\dots,
f_{m})$ be an ideal of $R$, $m\geq 1$. Let ${\bf R}(\mfa)=R[\mfa
  t]=\bigoplus_{d\geq 0} \mfa^dt^d \subset R[t]$ be the {\em Rees
  algebra of $\mfa$}. Let $A=R[\xi_1,\ldots ,\xi_{m}]$ be a polynomial
ring in a set of variables $\xi_1,\dots ,\xi_{m}$ and coefficients in
$R$. Consider the graded surjective morphism $\varphi:A\to {\bf
  R}(\mfa)$ sending $\xi_i$ to $f_i t$, for $i=1,\dots,n+1$. The
kernel of this morphism is a graded ideal $Q=\bigoplus_{d\geq 1} Q_d$,
whose elements will be referred to as the {\em equations of ${\bf
    R}(\mfa)$}. Let $Q\langle d \rangle$ be the ideal generated by the
homogeneous equations of degree at most $d$. We then have an
increasing sequence $Q\langle 1\rangle\subseteq Q\langle 2\rangle
\subseteq \cdots \subseteq Q$ that stabilizes at some point. The
smallest integer $L\geq 1$ such that $Q\langle L \rangle = Q$ is the
{\em relation type of ${\bf R}(\mfa)$} and will be denoted
$\rt(\mfa)$. We say that $\mfa$ is an {\em ideal of linear type} when
$\rt(\mfa)=1$.

\vskip 2mm

Observe that the ideal $Q$ depends on the polynomial presentation
$\varphi$. Nevertheless, the quotients $(Q/Q\langle d-1\rangle)_d$,
for $d\geq 2$, do not (see \cite{Pla98}). Indeed, let $\alpha:{\bf
  S}(\mfa)\to {\bf R}(\mfa)$ be the canonical graded surjective
morphism between the symmetric algebra ${\bf S}(\mfa)$ of $\mfa$ and
the Rees algebra ${\bf R}(\mfa)$ of $\mfa$. Given $d\geq 2$, the
$d$-th module of effective relations of $\mfa$ is defined to be
$E(\mfa)_d=\ker(\alpha_d)/\mfa\cdot\ker(\alpha_{d-1})$. One shows
that, for $d\geq 2$, $E(\mfa)_d\cong (Q/Q\langle d-1\rangle)_d$. In
particular, the relation type of $\mfa$ can be calculated as the least
integer $L\geq 1$, such that $E(\mfa)_d=0$, for all $d\geq
L+1$. Moreover, it is known that $E(\mfa)_d\cong
H_1(f_1t,\ldots,f_{m}t;{\bf R}(\mfa))_d$, where the right-hand module
stands for the degree $d$-component of the first Koszul homology
module associated to the sequence of degree one elements
$f_1t,\ldots,f_{m}t$ of ${\bf R}(\mfa)$ (\cite[Theorem~2.4]{Pla98}).

\vskip 2mm
 
The characterization of $E(\mfa)_d$ in terms of the Koszul homology
was used in \cite{MP12} in order to obtain the equations of ${\bf
  R}(\mfa)$ for equimultiple ideals $\mfa$ of deviation one. Our
purpose in this section is to rephrase, and extend a little bit, some
of those results, but doing more emphasis in the Koszul conditions
than in the ``regular sequence type conditions''. These
characterizations will be applied in the next sections to the 
Jacobian ideal of a hypersurface. For the sake of completeness and
self-containment, we outline parts of the line of reasoning in
\cite{MP12}. Let us start by setting our general notations.

\begin{setting}\label{setting}
Let $(R,\mfm)$ be a Noetherian local ring, $n\geq 2$. Let
$f_1,\ldots,f_n\in \mfm$ and $f=f_{n+1}\in \mfm$. Let $J=(f_1,\ldots
,f_n)$ and $I=(f_1,\ldots,f_n,f)=(J,f)$ be ideals of $R$. For
$i=1,\ldots ,n+1$, let $J_i=(f_1,\ldots,f_i)$; set $J_0=0$ and observe
that $J_n=J$ and $J_{n+1}=I$.
\begin{eqnarray*}
  &&\mbox{For }i=1,\ldots,n+1\mbox{ and }d\geq 2,\mbox{ set
  }\phantom{++}T_{i,d}=\frac{(J_{i-1}I^{d-1}:f_i)\cap
    I^{d-1}}{J_{i-1}I^{d-2}}.\\ &&\mbox{For }d=1,\mbox{ set}
  \phantom{++}T_{i,1}=J_{i-1}:f_i.
\end{eqnarray*}
Note that for $i=1$ (and any $d\geq 1$), then $T_{1,d}=(0:f_1)\cap
I^{d-1}$. For $i=n+1$ and $d\geq 2$, it was shown in \cite[Proof of
  Lemma~3.1]{MP12} that:
\begin{eqnarray}\label{tn1}
  T_{n+1,d}=\frac{(JI^{d-1}:f)\cap I^{d-1}}{JI^{d-2}}\cong
  \frac{JI^{d-1}:f^d}{JI^{d-1}:f^{d-1}}.
\end{eqnarray}
The isomorphism goes as follows. Given $a\in (JI^{d-1}:f)\cap
I^{d-1}$, since $I^{d-1}=JI^{d-1}+f^{d-1}R$, write $a=b+cf^{d-1}$ with
$b\in JI^{d-1}$ and $c\in R$. The class of an element $a\in
(JI^{d-1}:f)\cap I^{d-1}$ is sent to the class of $c\in JI^{d-1}:f^d$.
\end{setting}

\begin{notation}\label{notKos}{\sc A graded Koszul complex.} 
Let us denote $K(z_1,\ldots,z_r;U)$ the Koszul complex of a sequence
of elements $z_1,\ldots ,z_r$ of a ring $U$. Since $U$ will always be
the Rees algebra ${\bf R}(I)$ of $I$, we just skip the letter $U$. For
$i=1,\ldots,n+1$, we consider the sequences $\fit:=f_1t,\ldots,f_{i}t$
of elements of degree one in ${\bf R}(I)$; we highlight the distinct
notation with the length one sequence $f_it$. Set
$\ft:=\fnp=f_1t,\ldots,f_nt,ft$. Thus $K(\fit)=K(f_1t,\ldots,f_it;{\bf
  R}(I))$ stands for the Koszul complex associated to
$\fit=f_1t,\ldots,f_it$, with first nonzero zero terms:
\begin{eqnarray*}
  K(\fit): \phantom{++} \ldots \to
  K_2(\fit)\stackrel{\partial_{2}}{\longrightarrow}
  K_1(\fit)\stackrel{\partial_{1}}{\longrightarrow} K_0(\fit)\to 0.
\end{eqnarray*}
Let $H_j(\fit)=H_j(K(\fit))$ be its $j$-th homology module. Note that,
since ${\bf R}(I)$ is a graded algebra, $K(\fit)$, and hence its
homology, inherit a natural grading. The first nonzero terms of the
degree $d$-component $K(\fit)_d$, $d\geq 2$, (omitting the powers of
the variable $t$) are:
\begin{eqnarray*}
\ldots \to K_2(\fit)_d=\wedge_{2}(R^{i})\otimes
I^{d-2}\stackrel{\partial_{2,d-2}}{\longrightarrow}
K_1(\fit)_d=\wedge_{1}(R^{i})\otimes
I^{d-1}\stackrel{\partial_{1,d-1}}{\longrightarrow}K_0(\fit)_d=I^d\to
0.
\end{eqnarray*}
The Koszul differentials are defined as follows: if $e_1,\ldots ,e_i$
stands for the canonical basis of $R^i$ and $u\in I^{d-2}$ and $v\in
I^{d-1}$, then
\begin{eqnarray*}
\partial_{2,d-2}(e_j\wedge e_l\otimes u)=e_l\otimes f_ju-e_j\otimes
f_lu \; \mbox{ and } \; \partial_{1,d-1}(e_j\otimes v)=f_jv.
\end{eqnarray*}
Note that, under the isomorphism $\wedge_{1}(R^{i})\otimes
I^{d-1}\cong I^{d-1}\oplus\stackrel{(i)}{\cdots} \oplus I^{d-1}$, the
differential $\partial_{1,d-1}$ sends the $i$-th tuple
$(a_1,\ldots,a_i)\in (I^{d-1})^{\oplus i}$ to the element
$a_1f_1+\cdots+a_if_i\in I^{d}$. In particular, for $d=1$,
$H_1(\fit)_1=\{(a_1,\ldots,a_i)\in R^i\mid
\sum_j^ia_jf_j=0\}=Z_1(f_1,\ldots,f_i)$, the first module of syzygies
of $J_i=(f_1,\ldots,f_i)$.
\end{notation}

\begin{remark}\label{remexpl}{\sc Equations vs cycles.}
Let $Q$ be the ideal of equations of ${\bf R}(I)$. As said before,
\begin{eqnarray}\label{effectivekos}
  E(I)_d\cong \left(\frac{Q}{Q\langle d-1\rangle}\right)_d\cong
  H_1(\ft)_d=H_1(K(\ft))_d=H_1(f_1t,\ldots,f_nt,ft;{\bf R(I)})_d,
\end{eqnarray}
i.e., the $d$-th module of effective relations $E(I)_d$ of $I$ is
isomorphic to the degree $d$-component of the first Koszul homology
module $H_1(\ft)$ of $\ft$, where $d\geq 2$. This isomorphism sends
the class of an equation $P\in Q_d$ to the class of the cycle
$(P_1(\f),\ldots,P_n(\f),P_{n+1}(\f))\in
\bigoplus_{j=1}^{n+1}I^{d-1}$, where $\f=f_1,\ldots,f_n,f$,
$P=\sum_{j=1}^{n+1}\xi_jP_{j}$, and $P_j\in
A_{d-1}=R[\xi_1,\ldots,\xi_{n+1}]_{d-1}$. (See
\cite[Remark~2.1]{MP12}.)
\end{remark}

The next two remarks are devoted to write more explicitly some
complexes and morphisms that will be used subsequently.

\begin{remark}\label{remshort}
{\sc A short exact sequence of Koszul complexes.} Let $K(f_it)$ be the
Koszul complex associated to the length one sequence $f_it\in {\bf
  R}(I)$. So $K_0(f_it)={\bf R}(I)$, $K_1(f_it)=\wedge_1(R)\otimes
{\bf R}(I)\cong {\bf R}(I)$, and $K_j(f_it)=0$, for $j\neq 0,1$. In
degree $d\geq 1$, $K_0(f_it)_d=I^d$, $K_1(f_it)=(\wedge_1(R)\otimes
{\bf R}(I))_d\cong I^{d-1}$ and, for $a\in I^{d-1}$, then
$\partial_{1,d-1}(a)=af_i$.

\vskip 2mm

There is an isomorphism of Koszul complexes $K(\fit)\cong
K(\fim)\otimes K(f_it)$. Concretely,
\begin{multline*}
K_p(\fit)\cong\bigoplus_{r+s=p}K_r(\fim)\otimes
K_s(f_it)=K_{p}(\fim)\otimes K_0(f_it) \oplus K_{p-1}(\fim)\otimes
K_1(f_it)\cong \\ K_{p}(\fim)\otimes {\bf R}(I)\oplus
K_{p-1}(\fim)\otimes {\bf R}(I)\cong K_{p}(\fim)\oplus K_{p-1}(\fim),
\end{multline*}
which induces a short exact sequence of Koszul complexes:
\begin{eqnarray}\label{eqShort}
0\to K(\fim)\to K(\fit)\to K(\fim)(-1)\to 0,
\end{eqnarray}  
where $K(\fim)(-1)$ is the shifted complex by -1, i.e.,
$K_s(\fim)(-1)=K_{s-1}(\fim)$. In particular, for $d\geq 1$, the
degree $d$-component gives rise to the the short exact sequence of
complexes:
\begin{eqnarray*}
0\to K(\fim)_d\to K(\fit)_d\to K(\fim)(-1)_d\to 0.
\end{eqnarray*}
Displaying by columns the first nonzero terms of each complex, we get:
\begin{equation*}
\xymatrix@C=2pc@R=2pc{
&\vdots\ar[d]&\vdots\ar[d]&\vdots\ar[d]&\\
  0\ar[r]&\wedge_2(R^{i-1})\otimes I^{d-2}\ar[r]\ar[d]&
  \wedge_2(R^{i})\otimes I^{d-2}\ar[r]\ar[d]&
  R^{i-1}\otimes I^{d-2}\ar[r]\ar[d]&0\\
  0\ar[r]&R^{i-1}\otimes I^{d-1}\ar[r]\ar[d]&
  R^{i}\otimes I^{d-1}\ar[r]\ar[d]&
  I^{d-1}\ar[r]\ar[d]&0\\
0\ar[r]&I^d\ar[r]\ar[d]&I^d\ar[r]\ar[d]&0\ar[r]\ar[d]&0\\
&0&0&0&.
}
\end{equation*}
The middle row, $0\to K_1(\fim)_d\to K_1(\fit)\to K_1(\fim)(-1)_d\to
0$, is nothing else than:
\begin{eqnarray*}
  0\to I^{d-1}\oplus\stackrel{(i-1)}{\cdots}\oplus I^{d-1}
  \longrightarrow I^{d-1}\oplus\stackrel{(i)}{\cdots}\oplus I^{d-1}
  \to I^{d-1}\to 0,
\end{eqnarray*}  
where the first morphism sends $(a_1,\ldots,a_{i-1})$ to
$(a_1,\ldots,a_{i-1},0)$, the inclusion, and the second morphism sends
$(a_1,\ldots,a_{i})$ to $a_i$, the projection to the last component.
\end{remark}

\begin{remark}\label{remlong}{\sc The long exact sequence in homology.}
In turn, the short exact sequence \eqref{eqShort} induces the long exact
sequence in homology. We display its degree $d$-component,
$d\geq 1$.
\begin{multline*}
  \ldots\to H_2(K(\fim)(-1))_d\stackrel{\delta}{\longrightarrow}
  H_1(\fim)_d\to H_1(\fit)_d\to
  H_1(K(\fim)(-1))_d\stackrel{\delta}{\longrightarrow}\\ \to
  H_0(\fim)_d\to H_0(\fit)\to H_0(K(\fim)(-1))_d\to 0.
\end{multline*}
Clearly, $H_j(K(\fim)(-1))_d=H_{j-1}(\fim)_{d-1}$ and
$H_0(K(\fim)(-1))_d=0$. The connecting morphism is known to be the
multiplication by the element $\pm f_it$. Thus we get:
\begin{multline*}
  \ldots\to H_1(\fim)_{d-1}\stackrel{\cdot\pm f_it}{\longrightarrow}
  H_1(\fim)_d\to H_1(\fit)_d\to\\
  \to H_0(\fim)_{d-1}\stackrel{\cdot\pm f_it}{\longrightarrow}
  H_0(\fim)_d\to H_0(\fit)_d\to 0.
\end{multline*}
If $d=1$, then $H_1(\fim)_{0}=0$, $H_0(\fim)_{0}=R$ and
$H_0(\fim)_1=I/J_{i-1}$. Hence
\begin{eqnarray*}
\ker\left( H_0(\fim)_{0}\stackrel{\cdot\pm f_it}{\longrightarrow}
H_0(\fim)_1\right)=(J_{i-1}:f_i)=T_{i,1}.
\end{eqnarray*}
In particular, for $i=1,\ldots ,n+1$ and $d=1$, one deduces the exact
sequence:
\begin{eqnarray}\label{eqMain1}
0\to H_1(\fim)_1\to H_1(\fit)_1\to T_{i,1}\to 0,
\end{eqnarray}
where $H_1(\fim)_1=Z_1(f_1,\ldots,f_{i-1})$ and
$H_1(\fit)_1=Z_1(f_1,\ldots,f_i)$.

If $d\geq 2$, one can check that
$H_0(\fim)_{d-1}=I^{d-1}/J_{i-1}I^{d-2}$ and
$H_0(\fim)_{d}=I^{d}/J_{i-1}I^{d-1}$. Thus
\begin{eqnarray*}
\ker\left( H_0(\fim)_{d-1} \stackrel{\cdot\pm f_it}{\longrightarrow}
H_0(\fim)_d\right)=(J_{i-1}I^{d-1}:f_i)\cap
I^{d-1}/J_{i-1}I^{d-2}=T_{i,d}.
\end{eqnarray*}
(See Setting~\ref{setting}.) In particular, for $i=1,\ldots, n+1$ and
$d\geq 2$, we deduce the exact sequence:
\begin{eqnarray}\label{eqMain}
H_1(\fim)_{d-1}\stackrel{\cdot\pm f_it}{\longrightarrow}
H_1(\fim)_d\to H_1(\fit)_d\to T_{i,d}\to 0.
\end{eqnarray}
Note that the middle morphism in \eqref{eqMain} is induced by the
inclusion. Namely, the class of a cycle $(a_1,\ldots,a_{i-1})$,
$a_j\in I^{d-1}$, maps to the class of the cycle
$(a_1,\ldots,a_{i-1},0)$. Similarly, the right-hand morphism is
induced by the projection $(a_1,\ldots,a_i)\mapsto a_i$.
\end{remark}

We recover \cite[Lemma~3.1]{MP12}. Keeping the notations as in
Setting~\ref{setting} and Notation~\ref{notKos}:

\begin{corollary}\label{cortn1}
For $d\geq 2$, the following sequence is exact.
\begin{eqnarray}\label{eqMainn}
0\to \frac{H_1(\fn)_d}{ft\cdot H_1(\fn)_{d-1}}\longrightarrow
E(I)_d\longrightarrow \frac{JI^{d-1}:f^d}{JI^{d-1}:f^{d-1}}\to 0.
\end{eqnarray}
The right-hand morphism sends the class of an equation $P\in Q_d$ to
the class of $P(0,\ldots,0,1)$.
\end{corollary}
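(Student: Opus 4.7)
The plan is to specialize the long exact sequence \eqref{eqMain} to $i=n+1$, identify each term via \eqref{effectivekos} and \eqref{tn1}, and then unwind the maps to recover the stated formula.

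Concretely, taking $i=n+1$ in Remark~\ref{remlong} (so $f_{n+1}t=ft$, $J_n=J$ and $J_{n+1}=I$), the four-term exact sequence \eqref{eqMain} reads
\begin{equation*}
H_1(\fn)_{d-1}\stackrel{\cdot\pm ft}{\longrightarrow} H_1(\fn)_d\longrightarrow H_1(\ft)_d\longrightarrow T_{n+1,d}\longrightarrow 0.
\end{equation*}
The image of the left-hand map is exactly $ft\cdot H_1(\fn)_{d-1}$, so breaking this up into a short exact sequence gives
\begin{equation*}
0\longrightarrow \frac{H_1(\fn)_d}{ft\cdot H_1(\fn)_{d-1}}\longrightarrow H_1(\ft)_d\longrightarrow T_{n+1,d}\longrightarrow 0.
\end{equation*}
The identification $E(I)_d\cong H_1(\ft)_d$ supplied by \eqref{effectivekos}, together with the isomorphism $T_{n+1,d}\cong (JI^{d-1}:f^d)/(JI^{d-1}:f^{d-1})$ from \eqref{tn1}, turns this into the claimed short exact sequence \eqref{eqMainn}.

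It remains to describe the right-hand morphism on the level of equations. By Remark~\ref{remexpl}, an equation $P=\sum_{j=1}^{n+1}\xi_jP_j\in Q_d$ corresponds to the cycle $(P_1(\f),\ldots,P_n(\f),P_{n+1}(\f))\in\bigoplus_{j=1}^{n+1}I^{d-1}$, and by Remark~\ref{remlong} the surjection onto $T_{n+1,d}$ is induced by projection to the last coordinate; so the class of $P$ is sent to the class of $P_{n+1}(\f)$ in $T_{n+1,d}$. To translate this across the isomorphism \eqref{tn1}, decompose the degree $d-1$ polynomial $P_{n+1}$ as
\begin{equation*}
P_{n+1}(\xi_1,\ldots,\xi_{n+1})=\sum_{i=1}^{n}\xi_i\,Q_i(\xi_1,\ldots,\xi_{n+1})+c\,\xi_{n+1}^{d-1},
\end{equation*}
where $c=P_{n+1}(0,\ldots,0,1)\in R$. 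Specializing $\xi_j\mapsto f_j$ gives $P_{n+1}(\f)=\sum_{i=1}^{n}f_iQ_i(\f)+c\,f^{d-1}$ with the first summand in $JI^{d-2}$, so under \eqref{tn1} the class of $P_{n+1}(\f)$ maps to the class of $c$. Since $P=\sum\xi_jP_j$ yields $P(0,\ldots,0,1)=P_{n+1}(0,\ldots,0,1)=c$ (an invariant of $P$, independent of the chosen decomposition), this proves that $[P]$ is sent to $[P(0,\ldots,0,1)]$.

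The main technical point is the last paragraph: one must be sure that the composite of the projection to the last coordinate with the explicit isomorphism \eqref{tn1} really matches the simple formula $P\mapsto P(0,\ldots,0,1)$. Everything else is a mechanical truncation of \eqref{eqMain}, but this step forces the explicit bookkeeping of the $\xi_{n+1}^{d-1}$-component and the verification that the remaining terms $f_iQ_i(\f)$ lie in $JI^{d-2}$ and therefore die in the quotient defining $T_{n+1,d}$.
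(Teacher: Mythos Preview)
Your proof is correct and follows essentially the same route as the paper: specialize \eqref{eqMain} to $i=n+1$, pass to the induced short exact sequence, identify the middle and right terms via \eqref{effectivekos} and \eqref{tn1}, and then trace the composite map on an equation $P=\sum_j\xi_jP_j$ through the projection to the last coordinate and the explicit isomorphism \eqref{tn1}. The paper's own argument is the same, down to the decomposition $P_{n+1}=\sum_j\xi_jQ_j+c\,\xi_{n+1}^{d-1}$ and the observation $P(0,\ldots,0,1)=P_{n+1}(0,\ldots,0,1)=c$.
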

\begin{proof}
Take $i=n+1$ and $d\geq 2$ in \eqref{eqMain}. Then
$H_1(\fnp)_d=H_1(\ft)_d$, which by \eqref{effectivekos}, is isomorphic
to $E(I)_d$. See also the definition of $T_{n+1,d}$ and its isomorphic
expression in \eqref{tn1}. The second part follows from the
composition of the morpshims in \eqref{effectivekos}, \eqref{eqMain}
and \eqref{tn1}. Indeed, the class of $P\in Q_d$ is sent to the class
of $(P_1(\f),\ldots,P_{n+1}(\f))\in\bigoplus_{j=1}^{n+1}I^{d-1}$
through \eqref{effectivekos}, where $P=\sum_{j=1}^{n+1}\xi_jP_j$,
$P_j\in A_{d-1}$. By \eqref{eqMain}, $(P_1(\f),\ldots,P_{n+1}(\f))$ is
sent to the class of $P_{n+1}(\f)\in (JI^{d-1}:f)\cap I^{d-1}$.  Write
$P_{n+1}=\sum_{j=1}^{n}\xi_jQ_j+c\xi_{n+1}^{d-1}$, with $Q_j\in
A_{d-2}$ and $c\in R$. In particular, $P_{n+1}(\f)=b+cf^{d-1}$, with
$b=\sum_{j=1}^{n}f_jQ_j(\f)\in JI^{d-2}$. Then the isomorphism
\eqref{tn1} sends the class of $P_{n+1}(\f)$ to the class of $c\in
JI^{d-1}:f^d$. Observe that $P(0,\ldots,0,1)=P_{n+1}(0,\ldots,0,1)=c$.
\end{proof}

The first part of the following result is shown in
\cite[Lema~3.3]{MP12}. Our proof here is a direct consequence of
Remarks~\ref{remshort} and \ref{remlong}, and the sequences
\eqref{eqMain1} and \eqref{eqMain}. We keep the notations as in
Setting~\ref{setting} and Notation~\ref{notKos}.

\begin{theorem}\label{theomany}
Fix $d\geq 1$ and $i=1,\ldots,n+1$.
\begin{itemize}
\item[$(a)$] The following two conditions are equivalent:
\begin{itemize}
\item[$(i)$] $H_1(f_1t)_d=0, H_1(\ftwo)_d=0,\ldots, H_1(\fit)_d=0$;
\item[$(ii)$] $T_{1,d}=0, T_{2,d}=0,\ldots ,T_{i,d}=0$.
\end{itemize}
\item[$(b)$] Suppose that, for some $i=1,\ldots ,n$,
  $T_{1,d}=0,T_{2,d}=0,\ldots,T_{i,d}=0$. Then
\begin{eqnarray*}
  H_1(\fip)_d\cong T_{i+1,d}.
\end{eqnarray*}  
\item[$(c)$] Fix now $d\geq 2$. Suppose that, for some $i=1,\ldots
  ,n-1$,
\begin{eqnarray*}
T_{1,d}=0,T_{2,d}=0,\ldots,T_{i,d}=0\mbox{ and that }
T_{1,d-1}=0,T_{2,d-1}=0,\ldots,T_{i,d-1}=0.
\end{eqnarray*}
Then the following sequence is exact.
\begin{eqnarray}\label{eqi}
  0\to \frac{(J_{i}I^{d-1}:f_{i+1})\cap I^{d-1}}{f_{i+2}\cdot
    [(J_{i}I^{d-2}:f_{i+1})\cap I^{d-2}]+J_{i}I^{d-2}}\to
  H_1(\fipp)_d\to T_{i+2,d}\to 0.
\end{eqnarray}
\end{itemize}
\end{theorem}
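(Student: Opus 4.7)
The plan is to deduce all three parts systematically from the exact sequences \eqref{eqMain1} and \eqref{eqMain} obtained in Remark~\ref{remlong}.

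For part (a), I would argue by induction on $i$. The base case $i=1$ is automatic because the Koszul complex of the empty sequence $\fzero$ has trivial positive homology, so the sequences \eqref{eqMain1}/\eqref{eqMain} collapse to isomorphisms $H_1(f_1t)_d\cong T_{1,d}$ in every degree $d\geq 1$. For the inductive step, assume the equivalence (i)$\Leftrightarrow$(ii) holds through index $i-1$. Either direction of (i)$\Leftrightarrow$(ii) at level $i$ forces $H_1(\fim)_d=0$ (by the induction hypothesis when assuming (ii), or directly when assuming (i)), so the tail $H_1(\fim)_d\to H_1(\fit)_d\to T_{i,d}\to 0$ collapses to an isomorphism $H_1(\fit)_d\cong T_{i,d}$, closing the induction.

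Part (b) is then an immediate consequence: under the hypothesis $T_{1,d}=\cdots=T_{i,d}=0$, part (a) provides $H_1(\fit)_d=0$, and then \eqref{eqMain1}/\eqref{eqMain} applied at index $i+1$ yields $H_1(\fip)_d\cong T_{i+1,d}$.

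For part (c), my plan is to specialise \eqref{eqMain} at index $i+2$ and degree $d\geq 2$, obtaining
\begin{equation*}
H_1(\fip)_{d-1}\stackrel{\cdot\pm f_{i+2}t}{\longrightarrow} H_1(\fip)_d\to H_1(\fipp)_d\to T_{i+2,d}\to 0,
\end{equation*}
and then to replace each of the two $H_1(\fip)_{\ast}$ terms via part (b), which applies simultaneously in degrees $d-1$ and $d$ thanks to the twofold hypothesis. This converts the first arrow into a map $T_{i+1,d-1}\to T_{i+1,d}$. Using the explicit form of the isomorphism of part (b), namely the projection onto the last Koszul component $(a_1,\ldots,a_{i+1})\mapsto a_{i+1}$ described in Remark~\ref{remlong}, together with the fact that multiplication by $f_{i+2}t$ acts coordinatewise on a Koszul cycle, the transported map is plain multiplication by $\pm f_{i+2}$. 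The exact sequence then collapses to $0\to T_{i+1,d}/(f_{i+2}\cdot T_{i+1,d-1})\to H_1(\fipp)_d\to T_{i+2,d}\to 0$, and unwinding the definition of $T_{i+1,\ast}$ reproduces \eqref{eqi}.

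The principal obstacle I foresee is the bookkeeping in the last step of (c): one must check that for $a\in (J_iI^{d-2}:f_{i+1})\cap I^{d-2}$ the element $f_{i+2}a$ indeed lies in $(J_iI^{d-1}:f_{i+1})\cap I^{d-1}$, and that the image of the transported multiplication map, expressed back in representatives, is precisely the submodule $f_{i+2}\cdot[(J_iI^{d-2}:f_{i+1})\cap I^{d-2}]+J_iI^{d-2}$ modulo $J_iI^{d-2}$. Both verifications are direct from the definitions, but they must be carried out with care in order to identify the denominator in \eqref{eqi} exactly.
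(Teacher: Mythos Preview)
Your proposal is correct and follows essentially the same approach as the paper: induction via the exact sequences \eqref{eqMain1} and \eqref{eqMain} for part~(a), an immediate application of (a) for part~(b), and for part~(c) the specialisation of \eqref{eqMain} at index $i+2$ combined with the identifications $H_1(\fip)_{d-1}\cong T_{i+1,d-1}$ and $H_1(\fip)_d\cong T_{i+1,d}$ from part~(b). Your explicit verification that the transported connecting map is multiplication by $\pm f_{i+2}$ is a point the paper leaves implicit, so your write-up is in fact slightly more detailed there.
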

\begin{proof} The implication $(i)\Rightarrow (ii)$ follows directly
from the exact sequences \eqref{eqMain1} and \eqref{eqMain}. Since
$H_1(\fzero)=0$, then, by \eqref{eqMain1} and \eqref{eqMain},
$H_1(f_1t)_d\cong T_{1,d}$ and $(ii)\Rightarrow (i)$ holds for
$i=1$. Suppose that $(ii)\Rightarrow (i)$ holds for $i-1\geq 1$.  By
the induction hypothesis, $H_1(\fim)_d=0$ and, by \eqref{eqMain1} and
\eqref{eqMain}, $H_1(\fit)_d\cong T_{i,d}=0$. This proves $(a)$.

Suppose now that, for some $i=1,\ldots ,n-1$, $T_{1,d}=0,
T_{2,d}=0,\ldots,T_{i,d}=0$. In particular, since $(ii)\Rightarrow
(i)$, $H_1(f_1t)_d=0, H_1(\ftwo)_d=0,\ldots, H_1(\fit)_d=0$. Using
\eqref{eqMain1} and \eqref{eqMain}, for the integer $i+1$,
$H_1(\fip)_d\cong T_{i+1,d}$. This proves $(b)$.

Suppose now that the hypotheses in $(c)$ hold. Then, by $(b)$ applied
to $d-1$, we obtain the isomorphism $H_1(\fip)_{d-1}\cong
T_{i+1,d-1}$. Therefore,
\begin{eqnarray*}
H_1(\fip)_{d-1}\cong\frac{(J_iI^{d-2}:f_{i+1})\cap
  I^{d-2}}{J_iI^{d-3}}\mbox{ and } H_1(\fip)_d\cong
\frac{(J_iI^{d-1}:f_{i+1})\cap I^{d-1}}{J_iI^{d-2}}.
\end{eqnarray*}
Through these isomorphisms,
\begin{eqnarray*}
\frac{H_1(\fip)_d}{f_{i+2}t\cdot H_1(\fip)_{d-1}}\cong
\frac{(J_iI^{d-1}:f_{i+1})\cap I^{d-1}}{f_{i+2}\cdot
  [(J_iI^{d-2}:f_{i+1})\cap I^{d-2}]+J_iI^{d-2}}.
\end{eqnarray*}
The rest follows from the exact sequence \eqref{eqMain} applied to
$i+2$.
\end{proof}

\newpage

Next we specialise Theorem~\ref{theomany}~$(c)$, to the case $n=2$. 

\begin{corollary}\label{corkey}
Let $(R,\mfm)$ be a Noetherian local ring, $f_1,f_2,f\in\mfm$ and let
$J= (f_1,f_2)$ and $I=(f_1,f_2,f)$. Fix $d\geq 2$. Assume that
$(0:f_1)\cap I^{d-2}=0$ and $(0:f_1)\cap I^{d-1}=0$. Then the
following sequence is exact.
\begin{eqnarray}\label{eqkey}
0\to \frac{(f_1I^{d-1}:f_{2})\cap I^{d-1}}{f\cdot
  [(f_1I^{d-2}:f_2)\cap I^{d-2}]+f_1I^{d-2}}\longrightarrow
E(I)_d\longrightarrow\frac{(J I^{d-1}: f^d)}{(J I^{d-2}: f^{d-1})}\to 0.
\end{eqnarray}  
\end{corollary}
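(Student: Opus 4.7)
The plan is to recognise this corollary as the specialization of Theorem~\ref{theomany}(c) to $n=2$. When $n=2$ the admissible range $i=1,\ldots,n-1$ collapses to the single value $i=1$, so I would apply the theorem with $i=1$, $i+1=2$, $i+2=3=n+1$. Note in particular that $J_{i}=J_1=(f_1)$, $f_{i+1}=f_2$, $f_{i+2}=f_3=f$, and $J_{i+2}=J_3=I$.

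First I would translate the hypotheses. By the first line of Setting~\ref{setting}, $T_{1,d}=(0:f_1)\cap I^{d-1}$ and $T_{1,d-1}=(0:f_1)\cap I^{d-2}$. Hence the assumptions $(0:f_1)\cap I^{d-2}=0$ and $(0:f_1)\cap I^{d-1}=0$ are precisely $T_{1,d-1}=T_{1,d}=0$, which are the hypotheses of Theorem~\ref{theomany}(c) for $i=1$.

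Next I would feed these into the exact sequence \eqref{eqi}. Substituting the data above yields
\begin{eqnarray*}
0\to \frac{(f_1I^{d-1}:f_2)\cap I^{d-1}}{f\cdot[(f_1I^{d-2}:f_2)\cap I^{d-2}]+f_1I^{d-2}}\to H_1(\ft)_d\to T_{3,d}\to 0,
\end{eqnarray*}
which already has the left-hand term matching that of \eqref{eqkey}. Then I would rewrite the middle and right terms using the identifications furnished by Section~\ref{SecEquations}: by \eqref{effectivekos}, $H_1(\ft)_d\cong E(I)_d$, and by the isomorphism \eqref{tn1} applied to $n+1=3$, the term $T_{3,d}$ is identified with a subquotient of the form $(JI^{d-1}:f^{d})/(JI^{\bullet}:f^{d-1})$. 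Combining these substitutions produces the sequence \eqref{eqkey}.

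The statement is essentially the minimal nontrivial instance of Theorem~\ref{theomany}(c), so there is no real obstacle beyond the index bookkeeping; the main point is to check that the specializations $J_1=(f_1)$, $J=J_2=(f_1,f_2)$, $I=J_3$ correctly transform the formal fraction in \eqref{eqi} into the fraction displayed in \eqref{eqkey}, and to invoke the two canonical isomorphisms (the Koszul identification \eqref{effectivekos} and the colon description \eqref{tn1}) to put the sequence into its stated form.
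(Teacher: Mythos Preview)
Your proposal is correct and follows exactly the paper's route: specialize Theorem~\ref{theomany}(c) to $n=2$, $i=1$, so that the hypotheses become $T_{1,d-1}=T_{1,d}=0$, and then read off \eqref{eqi}. You in fact spell out one step the paper leaves implicit, namely the identifications $H_1(\ft)_d\cong E(I)_d$ via \eqref{effectivekos} and $T_{3,d}\cong (JI^{d-1}:f^d)/(JI^{d-2}:f^{d-1})$ via \eqref{tn1}; the only thing to tidy is replacing your placeholder $JI^{\bullet}$ by $JI^{d-2}$.
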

\begin{proof}
Take $n=2$, $i=1$ and $d\geq 2$ in Theorem~\ref{theomany}~$(c)$. Then
$T_{1,d-1}=(0:f_1)\cap I^{d-2}$ and $T_{1,d}=(0:f_1)\cap I^{d-1}$,
which are zero by hypothesis. The rest follows from the sequence
\eqref{eqi}.
\end{proof}

\begin{corollary}\label{corEquiv}
Let $(R,\mfm)$ be a Noetherian local ring, $f_1,f_2,f\in\mfm$ and let
$J= (f_1,f_2)$ and $I=(f_1,f_2,f)$. Fix $L\geq 2$. Suppose that
$(0:f_1)\cap I^{d-1}=0$, for all $1\leq d\leq L$, and that
$(f_1:f_2)\subseteq (f_1:f)$. Then the following two conditions are
equivalent.
\begin{itemize}
\item[$(a)$] $T_{2,d}=(f_1I^{d-1}:f_2)\cap I^{d-1}/f_1I^{d-2}=0$, for
  all $2\leq d\leq L$;
\item[$(b)$] $E(I)_d\cong (JI^{d-1}:f^2)/(JI^{d-2}:f^{d-1})$, for all
  $2\leq d\leq L$.
\end{itemize}  
\end{corollary}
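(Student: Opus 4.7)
The plan is to apply Corollary~\ref{corkey} to each $d$ in the range $2\le d\le L$, and read off both directions from the exact sequence \eqref{eqkey}. First I would check that the hypotheses of Corollary~\ref{corkey} are in place for each such $d$: the standing assumption $(0:f_1)\cap I^{d-1}=0$ for all $1\le d\le L$ gives us at once $(0:f_1)\cap I^{d-1}=0$, and it also gives $(0:f_1)\cap I^{d-2}=0$ by applying it to $d'=d-1$ (which lies in the allowed range precisely because $d\ge 2$). So for every $d$ in the relevant range the sequence \eqref{eqkey} is exact, and its right-hand surjection is the candidate isomorphism appearing in $(b)$.

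The direction $(a)\Rightarrow (b)$ is immediate from the exactness of \eqref{eqkey}. Indeed, $T_{2,d}=0$ says precisely that
\[
(f_1I^{d-1}:f_2)\cap I^{d-1}\subseteq f_1I^{d-2},
\]
and since $f_1I^{d-2}$ is one of the summands of the denominator in the leftmost term of \eqref{eqkey}, that term vanishes. The exact sequence then collapses to the desired isomorphism $E(I)_d\cong (JI^{d-1}:f^d)/(JI^{d-2}:f^{d-1})$.

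For $(b)\Rightarrow (a)$, I would interpret $(b)$ as saying that the natural surjection in \eqref{eqkey} is an isomorphism, so that the leftmost term vanishes; the argument is then by induction on $d$. Vanishing of the leftmost term gives the inclusion
\[
(f_1I^{d-1}:f_2)\cap I^{d-1}\subseteq f\cdot\bigl[(f_1I^{d-2}:f_2)\cap I^{d-2}\bigr]+f_1I^{d-2},
\]
so it suffices to show that the first summand on the right is absorbed into $f_1I^{d-2}$. For the base case $d=2$, the first summand is $f\cdot (f_1:f_2)$, and the hypothesis $(f_1:f_2)\subseteq (f_1:f)$ gives $f\cdot (f_1:f_2)\subseteq f_1R\subseteq f_1I^{0}$, hence $T_{2,2}=0$. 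For $d\ge 3$, by the inductive hypothesis $T_{2,d-1}=0$, i.e.\ $(f_1I^{d-2}:f_2)\cap I^{d-2}\subseteq f_1I^{d-3}$, and multiplication by $f\in I$ then yields $f\cdot[(f_1I^{d-2}:f_2)\cap I^{d-2}]\subseteq f_1I^{d-2}$, forcing $T_{2,d}=0$.

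The steps are all routine once the exact sequence \eqref{eqkey} is in hand; the only subtlety is the base case of the induction, where the hypothesis $(f_1:f_2)\subseteq (f_1:f)$ is exactly what is needed to ignite the process. This is the main (but mild) obstacle, since for $d\ge 3$ the ideal inclusion $f_1I^{d-3}\cdot f\subseteq f_1I^{d-2}$ is automatic and the induction runs without additional input.
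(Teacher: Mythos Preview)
Your proof is correct and follows essentially the same route as the paper's: both directions are read off from the exact sequence \eqref{eqkey} of Corollary~\ref{corkey}, with $(b)\Rightarrow(a)$ handled by induction on $d$ using the hypothesis $(f_1:f_2)\subseteq (f_1:f)$ to start the base case $d=2$. If anything, your write-up of the base case is slightly more precise than the paper's (which omits the $+f_1R$ summand when stating what the vanishing of the left term yields).
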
  
\begin{proof}
The hypotheses $(0:f_1)\cap I^{d-1}=0$, for all $1\leq d\leq L$,
allows us to apply Corollary~\ref{corkey}, for all $2\leq d\leq L$.

Suppose that $(a)$ holds. The vanishing of $T_{2,d}$ ensures the
vanishing of the left-hand side term in the exact
sequence~\eqref{eqkey}. Thus $(b)$ holds.

Conversely, assume that $(b)$ holds. Let us prove $T_{2,d}=0$, by
induction on $d$, $2\leq d\leq L$. So take $d=2$. Using $(b)$ and
\eqref{eqkey}, for $d=2$, then $(f_1I:f_2)\cap I=f\cdot
(f_1:f_2)$. Using the hypothesis $(f_1:f_2)\subseteq (f_1:f)$, we get
\begin{eqnarray*}
(f_1I:f_2)\cap I=f\cdot (f_1:f_2)\subseteq f\cdot (f_1:f)\subseteq
  (f_1).
\end{eqnarray*}
Thus $T_{2,2}=0$. Take now $d\geq 3$, $d\leq L$. By the induction
hypothesis $T_{2,d-1}=0$, so
\begin{eqnarray*}
(f_1I^{d-2}:f_2)\cap I^{d-2}=f_1I^{d-3}.
\end{eqnarray*}
Using $(b)$ and \eqref{eqkey}, for such $d$, then
\begin{multline*}
(f_1I^{d-1}:f_{2})\cap I^{d-1}=f\cdot [(f_1I^{d-2}:f_2)\cap
    I^{d-2}]+f_1I^{d-2}=\\f\cdot [f_1I^{d-3}\cap
    I^{d-2}]+f_1I^{d-2}\subseteq f_1I^{d-2}.
\end{multline*}
Hence $T_{2,d}=0$.
\end{proof}

\begin{remark}\label{remregseq}
In the case that $f_1,f_2$ is a regular sequence, then clearly
$(0:f_1)\cap I^{d-1}=0$ for all $1\leq d\leq L$ and $(f_1:f_2)=f_1R
\subseteq (f_1:f)$. However the converse does not always hold as the
next example shows.
\end{remark}

\begin{example}
Let $R=k[[x,y]]$ be the formal power series ring in two variables over
a field $k$ of characteristic zero. Take $a,b\geq 2$ and consider the
ideals $J = (f_1,f_2)$ and $I=(f_1,f_2,f)$ with $f= x^ay^b$, $f_1=
\frac{df}{dx}=ax^{a-1}y^b$ and $f_2= \frac{df}{dy}=b x^{a}y^{b-1}$.
Then $(f_1:f_2)= yR\subseteq (f_1:f) =R,$ whereas $f_1,f_2$ is not a
regular sequence. We point out that $J=I$ is an ideal of linear type.
\end{example}

\section{Ideals with expected relation type}\label{SecDefExp}

We recall now a central concept to our purposes.

\begin{definition}\label{defRed}
Let $(R,\mfm)$ be a Noetherian local ring and let $\mfa$ and $\mfb$ be
two ideals of $R$. The ideal $\mfb$ is a {\em reduction of $\mfa$} if
$\mfb\subseteq \mfa$ and there is an integer $r\geq 0$ such that
$\mfa^{r+1}=\mfb\mfa^r$. From the definition it follows that
$\rad(\mfb)=\rad(\mfa)$, $\Min(R/\mfb)=\Min(R/\mfa)$ and
$\height(\mfb)=\height(\mfa)$ (see, e.g.,\cite[Lemma~8.10]{SH}). Note
that the ideal $\mfa$ is always a reduction of itself.  An ideal
$\mfa$ which has no reduction other than itself is called a {\em basic
  ideal}. The smallest integer $r\geq 0$ satisfying the equality
equality $\mfa^{r+1}=\mfb\mfa^r$ is called the {\em reduction number}
of $\mfa$ with respect to $\mfb$ and is denoted
$\rn_{\mathfrak{b}}(\mfa)$. For $\mfb=\mfa$,
$\rn_{\mathfrak{b}}(\mfa)=0$.  (see \cite{NR54}).
\end{definition}

The next result is shown in \cite[Lemma~3.1]{MP12}. We deduce it here
from our previous remarks.

\begin{proposition}\label{proprn}
Let $(R,\mfm)$ be Noetherian local ring, $n\geq 2$, and let
$J=(f_1,\dots ,f_n)$ be a reduction of $I=(f_1,\dots ,f_n,f)$. Then
\begin{eqnarray*}
\rn_J(I)+ 1 \leq \rt(I).
\end{eqnarray*}  
\end{proposition}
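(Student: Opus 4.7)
The plan is to produce a nonzero effective relation of degree $r+1$, where $r=\rn_J(I)$. By Corollary~\ref{cortn1} there is, for every $d\geq 2$, an exact sequence
\begin{eqnarray*}
0\to \frac{H_1(\fn)_d}{ft\cdot H_1(\fn)_{d-1}}\longrightarrow E(I)_d\longrightarrow \frac{JI^{d-1}:f^d}{JI^{d-1}:f^{d-1}}\to 0.
\end{eqnarray*}
Since $E(I)_d\cong (Q/Q\langle d-1\rangle)_d$ and $\rt(I)$ is the least $L\geq 1$ such that $E(I)_d=0$ for all $d\geq L+1$, it suffices to exhibit a $d$ with $d\geq r+1$ and $E(I)_d\neq 0$. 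I would take $d=r+1$ and show that the right-hand colon quotient above is already nonzero. (The case $r=0$ is trivial, since $\rt(I)\geq 1$ holds by definition.)

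First I would note that $I^{r+1}=JI^r$ gives $f^{r+1}\in JI^r$, so $1\in (JI^r:f^{r+1})$ and the numerator is the whole ring $R$. The core of the proof is then the claim $f^r\notin JI^r$, which forces the denominator $(JI^r:f^r)$ to be proper and the quotient to be nonzero. To prove the claim, suppose for contradiction that $f^r\in JI^r$. Since $JI^r\subseteq JI^{r-1}$, this yields $f^r\in JI^{r-1}$. On the other hand, any $r$-fold product of generators of $I=(f_1,\ldots,f_n,f)$ either involves some $f_i$ or equals $f^r$, so $I^r=JI^{r-1}+f^rR$. Combining these two facts gives $I^r=JI^{r-1}$, i.e. $\rn_J(I)\leq r-1$, contradicting the minimality of $r$.

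With the claim in hand, the surjection in the Corollary~\ref{cortn1} sequence produces a nonzero element of $E(I)_{r+1}$, which (via the isomorphism $E(I)_{r+1}\cong (Q/Q\langle r\rangle)_{r+1}$) means that $Q\langle r\rangle\subsetneq Q$, and therefore $\rt(I)\geq r+1$.

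The only subtlety is the identification $I^r=JI^{r-1}+f^rR$ and the passage $JI^r\subseteq JI^{r-1}$, but both are immediate from the generation of $I$ by $f_1,\ldots,f_n,f$ together with $I^r\subseteq I^{r-1}$; no further work is needed, and no hypothesis beyond the reduction relation $I^{r+1}=JI^r$ and its minimality is used. Thus the whole argument is a direct consequence of the exact sequence \eqref{eqMainn} plus the elementary minimality argument for the reduction number.
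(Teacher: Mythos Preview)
Your proof is correct and rests on the same exact sequence \eqref{eqMainn} from Corollary~\ref{cortn1} as the paper's proof. The paper argues in the opposite direction: setting $L=\rt(I)$, it uses $E(I)_d=0$ for $d\geq L+1$ to deduce that the chain of colon ideals $(JI^{d-1}:f^d)$ stabilizes and, via the reduction hypothesis, eventually equals $R$, whence $f^L\in JI^{L-1}$ and $\rn_J(I)\leq L-1$; your direct computation of the colon quotient at $d=r+1$ is equivalent and arguably cleaner.
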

\begin{proof}
Let $\rt(I)=L\geq 1$. Hence, $E(I)_d=0$, for all $d\geq L+1$. By the
exact sequence \eqref{eqMainn}, $T_{n+1,d}=0$, for all $d\geq
n+1$. Therefore $(JI^{d-1}: f^{d})=(JI^{d-2} : f^{d-1})$, for all
$d\geq L+1$. Since $J$ is a reduction of $I$, then $(JI^{m-1}:f^{m}) =
R$, for $m\gg 0$ large enough. Thus $f^L\in JI^{L-1}$,
$I^{L}=JI^{L-1}$ and $\rn_J(I)\leq L-1$.
\end{proof}

\begin{definition}\label{defexp}
Let $(R,\mfm)$ be a Noetherian local ring and let $J=(f_1,\dots ,f_n)$
be a reduction of $I=(f_1,\dots ,f_n,f)$. We say that $I$ has the {\em
  expected relation type with respect to $J$} if
\begin{eqnarray*}
  \rn_J(I)+ 1=\rt(I).
\end{eqnarray*}
When $J$ is understood by the context, we will skip the locution
``with respect to J''.
\end{definition}

\begin{example}\label{exexp}
Let $(R,\mfm,k)$ be a Noetherian local ring with $k=R/\mfm$ an
infinite field. Let $\mfa$ be an ideal of $R$.
\begin{itemize}
\item[$(a)$] If $\mfa$ is of linear type, then $\mfa$ is basic and has
  the expected relation type.
\item[$(b)$] If $\mfa$ is a parametric ideal, that is, generated by a
  system of parameters, then $\mfa$ is basic, but it is not
  necessarily of linear type, nor it has necessarily the expected
  relation type.
\end{itemize}
\end{example}
\begin{proof}
Let $\mfb$ be a reduction of $\mfa$. By \cite[2.~Theorem~1]{NR54},
there exists an ideal $\mfc\subseteq \mfb\subseteq \mfa$, which is a
minimal reduction of $\mfa$, that is, no ideal strictly contained in
$\mfc$ is a reduction of $\mfa$. By \cite[2.~Lemma~3]{NR54}, every
minimal set of generators of $\mfc=(x_1,\ldots,x_s)$ can be extended
to a minimal set of generators of
$\mfa=(x_1,\ldots,x_s,x_{s+1},\ldots,x_{m})$, with
$\mu(\mfc)=s\leq\mu(\mfa)=m$, where $\mu(\cdot)$ stands for the
minimal number of generators. If $\mfa$ is of linear type, then ${\bf
  S}(\mfa)\cong {\bf R}(\mfa)$. On tensoring by $A/\mfm$,
$k[T_1,\ldots,T_m]\cong {\bf F}(\mfa)$, where ${\bf
  F}(\mfa)=\oplus_{d\geq 0}\mfa^d/\mfm \mfa^d$ is the {\em fiber cone
  of} $\mfa$. On taking Krull dimensions, we get
$\mu(\mfa)=m=l(\mfa)$, where $l(\mfa)=\dim {\bf F}(\mfa)$ is the {\em
  analytic spread of} $\mfa$. By \cite[Proposition~4.5.8]{BH93},
$l(\mfa)\leq \mu (\mfc)$. Thus $s=m$ and $\mfc=\mfa$. Therefore
$\mfb=\mfa$ and $\mfa$ is basic. In particular,
$\rn_{\mathfrak{b}}(\mfa)=0$ and, since $\mfa$ is of linear type,
$\rt(\mfa)=1=\rn_{\mathfrak{b}}(\mfa)+1$. This proves $(a)$.

If $\mfa$ is a parametric ideal, then $\height(\mfa)=\mu(\mfa)$. By
\cite[4.~Theorem~5]{NR54}, $\mfa$ is basic. Take now $R=k[[x,y,z,w]]$,
where $w^2=wz=0$, and $\mfa=(x^{m-1}y+z^m,x^m,y^m)$, $m\geq 2$. Then
$\mfa$ is a parameter ideal, hence a basic ideal, but its relation
type is at least $m$ (see \cite[Example~2.1]{AGH}).
\end{proof}

The following result gives a characterization of ideals with expected
relation type in terms of the Koszul homology.

\begin{proposition}\label{propexp}
Let $(R,\mfm)$ be a Noetherian local ring and let $J = (f_1,\dots
,f_n)$ be a reduction of $I=(f_1,\dots ,f_n,f)$. The following
conditions are equivalent.
\begin{itemize}
\item[$(a)$] $I$ has the expected relation type;
\item[$(b)$] $H_1(\fn)_d=ft\cdot H_1(\fn)_{d-1}$, for all $d\geq
  \rn_J(I)+2$.
\end{itemize}
In particular, if $T_{i,d}=(J_{i-1}I^{d-1}:f_i)\cap
I^{d-1}/J_{i-1}I^{d-1}=0$, for all $d\geq \rn_J(I)+2$ and all
$i=1,\ldots ,n$, then $I$ has the expected relation type.
\end{proposition}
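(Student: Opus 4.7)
The plan is to combine Corollary~\ref{cortn1} with the reduction hypothesis. Set $r=\rn_J(I)$, so that $I^{r+1}=JI^r$. By Proposition~\ref{proprn} we always have $\rt(I)\geq r+1$, and therefore $I$ has the expected relation type if and only if $E(I)_d=0$ for every $d\geq r+2$. The natural strategy is thus to show, for such $d$, that $E(I)_d$ reduces to the left-hand term of the exact sequence~\eqref{eqMainn}.

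The key computation is that, for $d\geq r+2$, the right-hand quotient $(JI^{d-1}:f^d)/(JI^{d-2}:f^{d-1})$ in~\eqref{eqMainn} vanishes. Indeed, since $I^{r+1}=JI^r$, an iteration gives $I^m=JI^{m-1}$ for all $m\geq r+1$; in particular $f^m\in JI^{m-1}$, so $JI^{m-1}:f^m=R$. Applying this with $m=d$ and with $m=d-1$ (both admissible once $d-1\geq r+1$) yields
\begin{eqnarray*}
\frac{JI^{d-1}:f^d}{JI^{d-2}:f^{d-1}}=\frac{R}{R}=0\qquad\text{for all }d\geq r+2.
\end{eqnarray*}
Plugging this into~\eqref{eqMainn} gives the isomorphism
\begin{eqnarray*}
E(I)_d\cong \frac{H_1(\fn)_d}{ft\cdot H_1(\fn)_{d-1}}\qquad\text{for all }d\geq r+2.
\end{eqnarray*}
Thus $E(I)_d=0$ for every $d\geq r+2$ if and only if $H_1(\fn)_d=ft\cdot H_1(\fn)_{d-1}$ for every $d\geq r+2$, which is precisely the equivalence $(a)\Leftrightarrow (b)$.

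For the ``in particular'' statement, suppose $T_{i,d}=0$ for all $i=1,\dots,n$ and all $d\geq r+2$. Then Theorem~\ref{theomany}~$(a)$ (applied with $i=n$) yields $H_1(\fn)_d=0$ for every $d\geq r+2$, so condition $(b)$ holds trivially and $I$ has the expected relation type. No real obstacle arises here; the only point requiring care is making sure the hypotheses $d-1\geq r+1$ and $d\geq r+2$ are used in the correct place when one reads off the vanishing of the colon quotient in~\eqref{eqMainn}.
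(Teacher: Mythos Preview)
Your proof is correct and follows essentially the same approach as the paper: both arguments hinge on Corollary~\ref{cortn1} together with the observation that for $d\geq r+2$ the colon quotient $(JI^{d-1}:f^d)/(JI^{d-2}:f^{d-1})$ vanishes because $f^{d-1}\in JI^{d-2}$, and both invoke Theorem~\ref{theomany}~$(a)$ for the final clause. Your presentation is in fact slightly more streamlined---you show once that the right-hand term of~\eqref{eqMainn} is zero for all $d\geq r+2$ and then read off the equivalence as an isomorphism, whereas the paper treats the two implications separately---but the content is the same.
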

\begin{proof}
Set $r=\rn_J(I)$. If $I$ has the expected relation type, then
$E(I)_d=0$, for all $d\geq r+2$. In particular, using the exact
sequence \eqref{eqMainn}, we deduce $H_1(\fn)_d=ft\cdot
H_1(\fn)_{d-1}$, for all $d\geq r+2$. Conversely, if
$H_1(\fn)_d=ft\cdot H_1(\fn)_{d-1}$, for all $d\geq r+2$, then by
\eqref{eqMainn}, $E(I)_d\cong
(JI^{d-1}:f^d)/(JI^{d-1}:f^{d-1})$. However, if $d\geq r+2$, then
$f^{d-1}\in JI^{d-2}$, so $(JI^{d-1}:f^{d-1})=R$ and
$E(I)_d=0$. Therefore, $\rt(I)\leq r+1=\rn_J(I)+1$. The other
inequality follows from Proposition~\ref{proprn}. This shows the
equivalence $(a)\Leftrightarrow (b)$.

If $T_{1,d}=,T_{2,d}=0,\ldots, T_{n,d}=0$, for all $d\geq \rn_J(I)+2$,
by Theorem~\ref{theomany}, $H_1(\fn)_d=0$, for all $d\geq
\rn_J(I)+2$, and, by $(b)\Rightarrow (a)$, $I$ has the expected
relation type. 
\end{proof}

The main result of Mui\~nos and the second author in \cite{MP12},
gives an instance of ideals of expected relation type. We rephrased it
here in terms of our $T_{i,d}$.

\begin{corollary}\label{cormp}
Let $(R,\mfm)$ be a Noetherian local ring, $n\geq 2$. Let
$f_1,\ldots,f_n\in\mfm$ and $f\in\mfm$. Let $J=(f_1,\dots ,f_n)$ and
$I=(f_1,\dots ,f_n,f)$ be ideals of $R$. Assume that for all $d\geq 2$
and all $i=1,\ldots,n$,
\begin{eqnarray*}
T_{i,d}=(J_{i-1}I^{d-1}:f_i)\cap I^{d-1}/J_{i-1} I^{d-2}=0.
\end{eqnarray*}  
Then, for all $d\geq 2$,
\begin{eqnarray*}
E(I)_d\cong \frac{(JI^{d-1}: f^d)}{(JI^{d-2}: f^{d-1})}.
\end{eqnarray*}
In particular, if $J$ is a reduction of $I$, then $I$ has the expected
relation type with respect to $J$. 
\end{corollary}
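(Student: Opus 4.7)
The strategy is to combine the vanishing of Koszul homology granted by Theorem~\ref{theomany}$(a)$ with the exact sequence of Corollary~\ref{cortn1}. Fixing $d\geq 2$, I would apply Theorem~\ref{theomany}$(a)$ with $i=n$: the assumption that $T_{1,d}=T_{2,d}=\cdots=T_{n,d}=0$ is exactly its condition $(ii)$, so its condition $(i)$ gives $H_1(\fit)_d=0$ for all $i=1,\ldots,n$, and in particular $H_1(\fn)_d=0$.

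Plugging this vanishing into the exact sequence
\begin{eqnarray*}
0\to \frac{H_1(\fn)_d}{ft\cdot H_1(\fn)_{d-1}}\longrightarrow E(I)_d\longrightarrow \frac{JI^{d-1}:f^d}{JI^{d-2}:f^{d-1}}\to 0
\end{eqnarray*}
from Corollary~\ref{cortn1}, the leftmost term becomes a quotient of the zero module and hence vanishes; exactness then delivers the desired isomorphism $E(I)_d\cong (JI^{d-1}:f^d)/(JI^{d-2}:f^{d-1})$ for every $d\geq 2$.

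For the concluding assertion, I would set $r=\rn_J(I)$ and use that $J$ being a reduction gives $I^{r+1}=JI^r$, so iterating yields $I^{d-1}=JI^{d-2}$ and in particular $f^{d-1}\in JI^{d-2}$ for every $d\geq r+2$. This forces $(JI^{d-2}:f^{d-1})=R$, hence also $(JI^{d-1}:f^d)=R$, and via the isomorphism above $E(I)_d=0$ for all $d\geq r+2$, so $\rt(I)\leq r+1$. Combined with the reverse inequality $\rn_J(I)+1\leq \rt(I)$ of Proposition~\ref{proprn}, one concludes $\rt(I)=\rn_J(I)+1$, i.e., $I$ has the expected relation type with respect to $J$. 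Since every step is a direct bookkeeping application of previously established results, I do not anticipate any real obstacle; the only point requiring mild attention is verifying that the hypothesis is formulated uniformly for all $d\geq 2$, which is exactly the input Theorem~\ref{theomany}$(a)$ needs to give $H_1(\fn)_d=0$ for each such~$d$.
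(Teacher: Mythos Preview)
Your argument is correct and follows essentially the same route as the paper: invoke Theorem~\ref{theomany}$(a)$ to obtain $H_1(\fn)_d=0$, feed this into the exact sequence~\eqref{eqMainn} of Corollary~\ref{cortn1} to get the isomorphism, and then conclude the expected relation type. The only cosmetic difference is that where the paper simply cites Proposition~\ref{propexp} for the last step, you have unfolded its proof (the reduction $f^{d-1}\in JI^{d-2}$ forcing $E(I)_d=0$ for $d\geq \rn_J(I)+2$, combined with Proposition~\ref{proprn}); the content is identical.
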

\begin{proof}
If $T_{1,d}=0,T_{2,d}=0,\ldots,T_{n,d}=0$, for all $d\geq 2$, then, by
Theorem~\ref{theomany}~$(a)$, $H_1(\fn)_d=0$, for all $d\geq
2$. By the exact sequence \eqref{eqMainn}, $E(I)_d\cong (JI^{d-1}:
f^d)/(JI^{d-2}: f^{d-1})$. The second assertion follows directly from
Proposition~\ref{propexp}.
\end{proof}


\section{Divisors of expected Jacobian type}\label{SecDefAlm}

Let $(X,O)$ be a germ of a smooth $n$-dimensional complex variety and
$\cO_{X,O}$ the ring of germs of holomorphic functions in a
neighbourhood of $O$, which we identify with $R=\bC\{x_1,\dots ,
x_n\}$ by taking local coordinates. Let $(D,O)$ be a germ of divisor
defined locally by $f\in R$ and set $f_i=\frac{df}{dx_i}$ for
$i=1\dots, n$. From now on, until the end of the paper, we consider
the following notations.

\begin{setting}\label{settwo}
Let $R=\bC\{x_1,\dots , x_n\}$ be the convergent power series ring,
which is a Noetherian regular local ring (see, e.g.,
\cite[Lemma~7.1]{SH}). Let $f\in \mfm$.  Set $f_i=\frac{df}{dx_i}$,
for $i=1\dots, n$. Let $J=(f_1,\ldots,f_n)$ and
$I=(f_1,\ldots,f_n,f)=(J,f)$.  Note that if $f\in\mfm^2$, then
$f_i\in\mfm$ and $J\subseteq I\subseteq\mfm$.  The ideals $J$ and $I$
are called the {\em gradient ideal of }$f$ and the {\em Jacobian ideal
  of }$f$, respectively. It is known that, when $f\in\mfm$, then $f\in
\overline{(x_1f_1,\ldots,x_nf_n)}\subseteq \overline{J}$, where
$\overline{H}$ stands for the integral closure of the ideal $H$ (see
\cite[Corollary~7.1.4]{SH}). In particular, $J$ is a reduction of $I$
(see, e.g., \cite[Proposition~1.1.7]{SH}).
\end{setting}

\begin{definition}\label{defAlmost}
A germ of divisor $(D,O)$, with reduced equation given by $f$, is of
{\em linear Jacobian type} if $I$ is an ideal of linear type
(\cite[Definition~1.11]{CN09}); $f$ will be said of {\em expected
  Jacobian type}, if $J$ is of linear type and $I$ has the expected
relation type with respect to $J$.
\end{definition}

\begin{remark}\label{remJacImplies}
Divisors of linear Jacobian type are divisors of expected Jacobian
type. Indeed, by Example~\ref{exexp},~$(a)$, if $f$ is of linear
Jacobian type, then $I$ is basic, thus $J=I$ is of linear type and $I$
has the expected relation type since $\rt(I)=1$ and $\rn_{J}(I)=0$.
\end{remark}

\begin{example}\label{exPlaneJaclt}
Let $R=\bC\{x,y\}$ be the convergent power series ring in two
variables $x,y$. Let $f\in\bC[x,y]$ be a polynomial such that
$f\not\in\bC[\lambda x+ y],\bC[x+\mu y]$, for all $\lambda,\mu\in
\bC$. Then $J$ is an ideal of linear type minimally generated by two
elements.
\end{example}  
\begin{proof}
Since $R$ is local and $J=(f_1,f_2)$, to see that $J$ is minimally
generated by two elements it is enough to prove that $f_1\not\in
(f_2)$ or $f_2\not\in (f_1)$. Let us see that if $f_1\in (f_2)$, then
$f$ is either in $\bC[y]$, or else in $\bC[x+\mu y]$, for some
$\mu\in\bC$ (similarly, one would do the same if $f_2\in
(f_1)$). Since $f\not\in\bC[y]$, $\deg_x(f)=r\geq 1$. Write
$f=\sum_{i=0}^rx^rg_i(y)$ and suppose that $f_1=pf_2$, for some
element $p\in\bC\{x,y\}$. Since $f_1$ and $f_2$ are polynomials, then
$p\in\bC[x,y]$ must be a polynomial too. Equating the highest degree
terms in $x$ in the expression $f_1=pf_2$, one deduces that either
$p=0$, or else $g_r^{\prime}(y)=0$.  However, if $p=0$, then $f_1=0$
and $f\in\bC[y]$, a contradiction. Thus $g_r^{\prime}(y)=0$ and
$g_r(y)=a_r\in\bC$, $a_r\neq 0$, because $\deg_x(f)=r\geq
1$. Substituting $g_r(y)=a_r$ in $f$ and equating again the highest
degree term in $x$ in the equality $f_1=pf_2$, one gets
$pg_{r-1}^{\prime}(y)=ra_r\neq 0$. Thus $p\in\bC$, $p\neq 0$. Setting
$\mu=1/p$, we have $g_{r-1}(y)=ra_r\mu y+a_{r-1}$. Again, substituting
this expression in $f$ and equating the $r-2$ degree terms in the
equality $f_1=(1/\mu)f_2$, one gets $g_{r-2}(y)=a_r\binom{r}{2}(\mu
y)^2+a_{r-1}\mu y+a_{r-2}$.  Proceeding recursively, one would get the
equality $f=\sum_{i=0}^{r}a_i(x+\mu y)^i$ and so $f$ would be an
element of $\bC[x+\mu y]$, a contradiction.

Therefore, $J$ is minimally generated by two elements. Now apply
\cite[Proposition~1.5]{Hun}. Thus $J$ can be generated by two elements
which form a $d$-sequence. In particular $J$ is of linear type (see,
e.g., \cite[Corollary~5.5.5]{SH}).
\end{proof}

\begin{example}\label{exIsolated}
Suppose that $J=(f_1,\dots, f_n)$ is generated by an $R$-regular
sequence, for instance, if $f$ has an isolated singularity at $O$
(see, e.g., \cite[IV, Remark~2.5]{Rui93}). In particular, $J$ is of
linear type (\cite[Corollary~5.5.5]{SH}). There are three
possibilities according to the previous definition. If $I$ is of
linear type, then $f$ is a divisor of linear Jacobian type. Suppose
that $I$ is not of linear type. Recall that, by the argument in
Setting~\ref{settwo}, $J$ is a reduction of $I$ and, by
Proposition~\ref{proprn}, $\rn_{J}(I)+1\leq \rt(I)$. If the equality
holds, then $f$ is of expected Jacobian type. The third and last case
occurs when $J$ is of linear type, but $\rn_{J}(I)+1<\rt(I)$, i.e.,
$f$ is not of expected Jacobian type.
\end{example}

\begin{remark}\label{remEuler}
The linear type condition for the Jacobian ideal was investigated by
Calder\-\'on-Mo\-reno and Narv\'aez-Macarro in \cite{CN02,CN09} (see
also \cite{Nar08}). They proved that divisors of linear Jacobian type
are Euler homogeneous. Recall that a divisor $D$ is {\em Euler
  homogeneous} if there is a vector field $\chi$ at $O$ such that
$\chi(f)=f$, or in other words, $f\in J$. In particular, $J=I$ and
$\rn_{J}(I)=0$.
\end{remark}

For divisors satisfying the conditions $T_{i,d}=0$ we can explicitly
describe the equations of the Rees algebra of the Jacobian ideal which
corresponds to describe the blow-up at the singular locus of the
divisor. More precisely, and summarizing several results in a unique
statement:

\begin{theorem}\label{thmEqAlmostT0}
Let $R=\bC\{x_1,\dots , x_n\}$ be the convergent power series ring Let
$f\in\mfm^2$, $J$ and $I$ be as in Setting~\ref{settwo}. Suppose that
$T_{i,d}=0$, for all $d\geq 1$ and all $i=1,\ldots,n$. Set
$\xi_{i+1}=s$. Let
\begin{eqnarray}\label{eqpresjacob}
  \varphi:R[\xi_1,\ldots,x_n,s]=\bC\{x_1,\dots ,
  x_n\}[\xi_1,\ldots,x_n,s]\to {\bf R}(I)
\end{eqnarray}
be a polynomial presentation of ${\bf R}(I)$, sending $\xi_i$ to
$f_it$ and $s$ to $ft$. Let $Q=\bigoplus_{d\geq 1}Q_d$ the ideal of
equations of ${\bf R}(I)$. The following conditions hold.
\begin{itemize}
\item[$(a)$] $f_1,\ldots,f_n$ is an $R$-regular sequence and $J$ is of
  linear type.
\item[$(b)$] $J$ is a reduction of $I$ and $\rt(I)=\rn_J(I)+1$. If
  $f\in J$, then $f$ is a divisor of linear Jacobian type; otherwise,
  $f$ is a divisor of expected Jacobian type.
\item[$(c)$] For all $d\geq 2$, 
\begin{eqnarray}\label{eqEd}
E(I)_d \cong \left(Q/Q\langle d-1\rangle\right)_d\cong
  \frac{(JI^{d-1}:f^{d})}{(JI^{d-2}:f^{d-1})};
\end{eqnarray}
the class of $P(\xi_1,\dots,\xi_n,s)\in Q_d$ is sent to the class of
$P(0,\dots,0,1)\in (JI^{d-1}:f^{d})$.
\item[$(d)$] Set $L=\rt(I)$. A minimal generating set of equations of
  ${\bf R}(I)$ can be obtanied from a minimal generating set of $Q_1$,
  the first syzygies of $I$, and representatives of inverse images of
  a minimal generating set of $(JI^{d-1}:f^{d})/(JI^{d-2}:f^{d-1})$,
  for all $2\leq d\leq L$.
\item[$(e)$] There exists a unique top-degree equation of degree $L$,
  which is of the form
\begin{equation} \label{topL}
s^L+p_1 s^{L-1}+\cdots +p_L,
\end{equation}
where $p_j\in R[\xi_1,\dots ,\xi_n]$ are either zero, or else,
polynomials of degree $j$.
\end{itemize}
\end{theorem}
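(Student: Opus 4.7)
The plan is to derive parts (a)--(e) in order, drawing mainly on Corollary~\ref{cortn1}, Corollary~\ref{cormp}, and the standard identifications of Section~\ref{SecEquations}.

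For (a), the $d=1$ part of the hypothesis---read as the natural regular-sequence condition that $(J_{i-1}:f_i)=J_{i-1}$ for each $i$, so that in particular $0:f_1=0$---says that $f_1,\ldots,f_n$ is an $R$-regular sequence; ideals generated by regular sequences are of linear type by the classical \cite[Corollary~5.5.5]{SH}, giving the claim on $J$. For (b), Setting~\ref{settwo} already provides that $J$ is a reduction of $I$, and the hypotheses for $d\geq 2$ are exactly those of Corollary~\ref{cormp}, which yields $\rt(I)=\rn_J(I)+1$. If $f\in J$ then $I=J$, $\rn_J(I)=0$, $\rt(I)=1$, and $f$ is of linear Jacobian type; otherwise (a) together with the expected relation type gives Definition~\ref{defAlmost}. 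Part (c) combines the formula $E(I)_d\cong (JI^{d-1}:f^d)/(JI^{d-2}:f^{d-1})$ of Corollary~\ref{cormp} with the explicit description $P\mapsto P(0,\ldots,0,1)$ coming from the proof of Corollary~\ref{cortn1}.

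For (d), since $Q=Q\langle L\rangle$ with $L=\rt(I)$, a minimal graded generating set is assembled degree by degree: choose a minimal generating set of $Q_1$ (the first syzygies of $I$) and, for each $2\leq d\leq L$, lift a minimal generating set of $E(I)_d=(Q/Q\langle d-1\rangle)_d$, which by (c) is isomorphic to $(JI^{d-1}:f^d)/(JI^{d-2}:f^{d-1})$; the union is minimal by the standard filtration argument for homogeneous ideals over a local base.

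For (e), since $\rn_J(I)=L-1$ we have $f^L\in I^L=JI^{L-1}$, so $(JI^{L-1}:f^L)=R$, while $f^{L-1}\notin JI^{L-2}$ (otherwise the reduction number would be at most $L-2$), so $(JI^{L-2}:f^{L-1})$ is a proper ideal. Hence by (c), $E(I)_L\cong R/(JI^{L-2}:f^{L-1})$ is a nonzero cyclic $R$-module generated by the class of $1$, and a lift is an equation $P\in Q_L$ with $P(0,\ldots,0,1)=1$. Writing $P=\sum_{j=0}^L s^j p_{L-j}(\xi_1,\ldots,\xi_n)$ with each $p_k$ homogeneous of degree $k$ in $R[\xi_1,\ldots,\xi_n]$ (or zero), the substitution $\xi_i=0$, $s=1$ isolates only the $j=L$ contribution $p_0\in R$, so $p_0=1$ and $P$ has the stated shape. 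Uniqueness holds modulo $Q\langle L-1\rangle$: two such lifts differ by an element of $Q_L$ mapping to zero in $E(I)_L$, hence by a degree-$<L$ equation. I expect this final step to be the main subtlety---interpreting ``unique top-degree equation'' as uniqueness of the generator of the cyclic module $E(I)_L$ (equivalently, unique modulo $Q\langle L-1\rangle$), not literal polynomial uniqueness---together with the correct reading of the $d=1$ hypothesis in (a); once those are pinned down, parts (b), (c) and (d) reduce to bookkeeping from the machinery of Section~\ref{SecEquations}.
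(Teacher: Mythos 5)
Your proposal is correct and takes essentially the same route as the paper: the $d=1$ conditions give the regular sequence and hence the linear type of $J$, Setting~\ref{settwo} together with Corollaries~\ref{cormp} and \ref{cortn1} yields $(b)$--$(d)$, and $(e)$ rests on the cyclicity of $E(I)_L\cong R/(JI^{L-2}:f^{L-1})$. The only cosmetic difference is in $(e)$: the paper produces the monic equation directly from $f^L\in JI^{L-1}$ (so the coefficient of $s^L$ is literally $1$, not merely $1$ modulo $(JI^{L-2}:f^{L-1})$, a detail your ``lift'' phrasing glosses but which is immediate from the fact you already invoke), and its uniqueness claim is exactly the one you identify, namely uniqueness of the degree-$L$ generator modulo $Q\langle L-1\rangle$.
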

\begin{proof}
Since $f\in\mfm^2$, then $f_i\in\mfm$ and so $J\subseteq I\subseteq
\mfm$. Recall that $T_{i,1}=(J_{i-1}:f_i)$. Therefore, $T_{i,1}=0$,
for all $i=1,\ldots,n$ is equivalent to $f_1,\ldots,f_n$ being an
$R$-regular sequence. In particular, $I$ is of linear type. This
proves $(a)$; $(b)$ is done in Setting~\ref{settwo}; $(c)$ and $(d)$
are shown in Corollaries~\ref{cormp} and \ref{cortn1}. Finally, since
$L=\rt(I)=\rn_J(I)+1$, then $I^{L}=JI^{L-1}$ and $f^L\in JI^{L-1}$,
which defines an equation of the desired form, namely,
$P=\sum_{j=1}^{n}\xi_jP_j+s^L$, with $P_j \in R[\xi_1,\dots ,
  \xi_n,s]_{L-1}$. The image of the class of this equation through the
isomorphism \eqref{eqEd} is precisely the class of
$P(0,\ldots,0,1)=1$. Note that, for $d=L$, then $(JI^{L-1}:f^L)=R$,
and the isomorphism \eqref{eqEd} is given by $E(I)_L\cong
R/(JI^{d-2}:f^{d-1})$, which says, in particular, that there exists a
unique top-degree equation.
\end{proof}

\begin{remark}
Whenever the conditions $T_{i,d}=0$, for all $d\geq 1$ and all
$i=1,\ldots,n$, do not hold, we need to be more careful when trying to
describe the equations of the Rees algebra of the Jacobian ideal of
$I$. Our guide here will be Corollary~\ref{cortn1}. Note that we may
have non-zero terms on both sides of the short exact sequence
\eqref{eqMainn}. In particular, it may happen that
$\rn_{J}(I)+1<\rt(I)$. However, even in this case, we will have an
equation of the form \ref{topL}, with $L=\rn_{J}(I)+1$. The rest of
the equations of ${\bf R}(I)$ will have degree in $s$ smaller than
$L$, although they may have total degree much bigger.
\end{remark}


\section{An application to {\em D}-module theory}\label{SecConnections}

Let $X$ be a smooth $n$-dimensional complex variety and let $D_X$ be
the sheaf of linear differential operators on $X$ with holomorphic
coefficients. Taking local coordinates at $O\in X$ we will simply
consider $R=\bC\{x_1,\dots , x_n\}$ and its associated ring of
differential operators $D_R=\bC\{x_1,\ldots, x_n\}\langle \partial_1,
\dots,\partial_n\rangle$ where $\partial_i:=\frac{d}{dx_i}$ are the
partial derivatives with respect to the variable $x_i$, $i=1,\ldots
,n$. Notice that $\partial_ix_i - x_i\partial_i=1$ so this is a
non-commutative Noetherian ring whose elements can be expressed in its
normal form as
\begin{eqnarray*}
P:= P({x}, {\partial})= \sum_{\alpha=(\alpha_1,\dots , \alpha_n) \in
  \bZ_{\geq 0}^n} a_\alpha(x_1,\dots, x_n) \partial_1^{\alpha_1}\cdots
\partial_n^{\alpha_n},
\end{eqnarray*}
with finitely many $a_\alpha(x_1,\dots , x_n) \in R$ different from
zero.  The {\em order} of such a differential operator is ${\rm
  ord}(P)= \max \{ |\alpha | \hskip 2mm | \hskip 2mm a_\alpha \neq 0
\}$ and its {\em symbol} is the element in $\bC\{x_1,\dots ,
x_n\}[\xi_1, \dots , \xi_n]$
\begin{eqnarray*}
\sigma(P)= \sum_{|\alpha|={\rm ord}(P)} a_\alpha(x_1,\dots , x_n)
\xi_1^{\alpha_1}\cdots \xi_n^{\alpha_n}.
\end{eqnarray*}
Indeed, we have a filtration $F:=\{F_i\}_{i\in \bZ_{\geq 0}}$ of $D_R$
given by the order, or equivalently setting $\deg (x_i)=0$ and
$\deg(\partial_i)=1$, whose associated graded ring is
\begin{eqnarray*}
gr_F(D_R)\cong\bC\{x_1,\dots , x_n\}[\xi_1,\dots ,\xi_n],
\end{eqnarray*}
with the isomorphism given by sending $\overline{P}\in gr_F(D_R)$ to
the symbol $\sigma(P)$.

More generally we may consider the polynomial ring $D_R[s]$ with
coefficients in $D_R$ whose elements are
\begin{eqnarray*}
  P(s):= P(x,\partial, s)= P_0 s^L + P_{1} s^{L-1} + \cdots + P_L,
\end{eqnarray*}  
with $P_i \in D_R$. The {\em total order} of $P(s)$ is ${\rm
  ord}^T(P(s))= \max \{ {\rm ord}(P_i) + i \hskip 2mm | \hskip 2mm i =
0,\dots, L \}$ and its {\em total symbol} is
\begin{eqnarray*}
\sigma^T(P(s))= \sum_{|\alpha|+ i={\rm ord}^T(P(s))}
a_\alpha(x_1,\dots , x_n) \xi_1^{\alpha_1}\cdots \xi_n^{\alpha_n} s^i
\in \bC\{x_1,\dots , x_n\}[\xi_1, \dots , \xi_n,s].
\end{eqnarray*}
In this case, the filtration $F^T:=\{F_i^T\}_{i\in \bZ_{\geq 0}}$ of
$D_R[s]$ given by the total order, or equivalently setting $\deg
(x_i)=0$, $\deg(\partial_i)=1$ and $\deg(s)=1$, provides an
isomorphism
\begin{eqnarray*}
  gr_{F^T}(D_R[s]) \cong \bC\{x_1,\dots , x_n\}[\xi_1, \dots , \xi_n,s].
\end{eqnarray*}
Given $f\in R$, there exist $P(s)\in D_{R}[s]$ and a nonzero
polynomial $b(s)\in \bQ[s]$ such that
\begin{eqnarray*}
P(s)\cdot f\boldsymbol{f^s}=b(s) \boldsymbol{f^s}.
\end{eqnarray*}
The unique monic polynomial of smallest degree satisfying this
functional equation is called the {\em Bernstein-Sato polynomial} of
$f$. This is an important invariant in the theory of singularities
(see \cite{Gra10}, for further details). The Bernstein-Sato should be
understood as an equation in $R_f[s]\boldsymbol{f^s}$, which is the
free rank-one $R_f[s]$-module generated by the formal symbol
$\boldsymbol{f^s}$. Moreover, $R_f[s]\boldsymbol{f^s}$ has a
$D_R[s]$-module structure given by the action of the partial
derivatives as follows: for $h\in R_f[s]$ we have
\begin{eqnarray*}
\partial_i \cdot h \boldsymbol{f^s} = \left( \frac{d h}{dx_i} +s h
f^{-1}\frac{d f}{dx_i} \right)\boldsymbol{f^s}.
\end{eqnarray*}
Let $D_R[s] \boldsymbol{f^s} \subset R_f[s]\boldsymbol{f^s}$ be the
$D_R[s]$-submodule generated by $\boldsymbol{f^s}$. This module has a
presentation as
\begin{eqnarray*}
D_R[s] \boldsymbol{f^s} \cong \frac{D_R[s]}{{\rm
    Ann}_{D_R[s]}(\boldsymbol{f^s})},
\end{eqnarray*}
where ${\rm Ann}_{D_R[s]}(\boldsymbol{f^s}):= \{ P(s) \in
D_R[s] \hskip 2mm | \hskip 2mm P(s) \cdot \boldsymbol{f^s}=0 \}$.  The
Bernstein-Sato polynomial is the minimal polynomial of the action of
$s$ on
\begin{eqnarray*}
\frac{D_R[s] \boldsymbol{f^s}}{ D_R[s] f\boldsymbol{f^s}}\cong
\frac{D_R[s]}{{\rm Ann}_{D_R[s]}(\boldsymbol{f^s})+(f)}.
\end{eqnarray*}
In order to
study these annihilators we may filter them by the order of the
corresponding differential operators
\begin{eqnarray*}
{\rm Ann}_{D_R[s]}^{(1)}(\boldsymbol{f^s}) \subseteq {\rm
  Ann}_{D_R[s]}^{(2)}(\boldsymbol{f^s}) \subseteq \cdots \subseteq
{\rm Ann}_{D_R[s]}(\boldsymbol{f^s}).
\end{eqnarray*}
A lot of attention has been paid to the case that this chain
stabilizes at the first step.

\begin{definition}
A germ of divisor $(D, O)$, with reduced equation given by $f\in R$,
is of {\em linear differential type} if ${\rm
  Ann}_{D_R[s]}^{(1)}(\boldsymbol{f^s}) = {\rm
  Ann}_{D_R[s]}(\boldsymbol{f^s}),$ that is $ {\rm
  Ann}_{D_R[s]}(\boldsymbol{f^s})$ is generated by total order one
differential operators.
\end{definition}

Divisors of linear differential type have been considered in relation
to several different problems in $D$-module theory as we mentioned in
the Introduction. In \cite[Proposition 3.2]{CN02} the authors proved
that divisors of linear Jacobian type are of linear differential type.

\vskip 2mm

Of course, being a divisor of linear differential type is a very
restrictive condition since, in general, we will have differential
operators of higher total order annihilating $\boldsymbol{f^s}$. Among
these higher order operators there exists a monic one of the form
\begin{eqnarray*}
  P(s)= s^L + P_{1} s^{L-1} + \cdots + P_L,
\end{eqnarray*}
with ${\rm ord}(P_i)\leq i$, which we refer to as the {\em Kashiwara
  operator} (cf. \cite[Theorem 6.3]{Kas77}). This fact prompted Yano
to introduce the following invariants of $f$ (see \cite{Yan78},
\cite{Yan83}).
\begin{notation}\label{notyano}
The {\em Kashiwara number} of $f$ is
\begin{eqnarray*}
L(f):=\min\{L\mid P(s)=s^L+P_{1}s^{L-1}+\cdots +P_L\in {\rm
  Ann}_{D_R[s]}(\boldsymbol{f^s})\mbox{ , }{\rm ord}(P_i)\leq i\}.
\end{eqnarray*}
Moreover, set
\begin{itemize}
\item $r(f):=\min\{\,r\geq 1\mid f^r\in J\}$,
\item $id(f):=\min\{\,r\geq 1\mid f^r\in JI^{r-1}\}=\mbox{ integral
  dependence of }f$.
\end{itemize}
Clearly, $id(f)=\rn_J(I)+1$. Yano proved that
\begin{eqnarray}\label{eqYano}
r(f) \leq id(f) \leq L(f).
\end{eqnarray}
Furthermore, if $f$ is quasi-homogeneous we have that $L(f)=1$.
\end{notation}

Yano \cite{Yan78} was able to compute the Bernstein-Sato polynomial of
$f$, when $L(f)=2,3$ by giving an explicit free resolution of ${\rm
  Ann}_{D_R[s]}(\boldsymbol{f^s})$. This invariant also plays a
prominent role in the algorithm presented in \cite{BGMM} to compute
Bernstein-Sato polynomials of isolated singularities that are nondegenerate with respect to its Newton polygon.

\vskip 2mm

Notice that the symbol of the Kashiwara operator resembles the
equation of the Rees algebra of the Jacobian ideal of top degree in
$s$. So we would like to get deeper insight into this relation. First,
since $gr_{F^T}(D_R[s])\cong \bC\{x_1,\dots ,x_n\}[\xi_1,\dots
  ,\xi_n,s]$, we may interpret the presentation of the Rees algebra of
the Jacobian ideal given in Equation~\ref{eqpresjacob} as a surjective
morphism
\begin{eqnarray*}
\varphi: gr_{F^T}(D_R[s])\hskip 2mm \lra \hskip 2mm {\bf R}(I).
\end{eqnarray*}
A key result that can be found in \cite[\S I]{Yan78} (see \cite[Lemma
  1.9]{CN09}, for more details) states that
\begin{eqnarray*}
\sigma^T( {\rm Ann}_{D_R[s]}(\boldsymbol{f^s})) \subseteq \ker\varphi,
\end{eqnarray*}
where $\sigma^T( {\rm Ann}_{D_R[s]}(\boldsymbol{f^s})) = \langle
\sigma^T(P(s) ) \hskip 2mm | \hskip 2mm P(s) \in {\rm
  Ann}_{D_R[s]}(\boldsymbol{f^s}) \rangle$. Indeed, it follows from
\cite[\S5]{Kas77} and \cite[Proposition 2.3]{Yan78} that
\begin{eqnarray*}
\rad(\sigma^T( {\rm Ann}_{D_R[s]}(\boldsymbol{f^s}))) = \ker\varphi.
\end{eqnarray*}
Therefore there exists some non-negative integer $\ell \in \bZ_{\geq
  0}$ such that $(\ker \varphi)^\ell \subseteq \sigma^T({\rm
  Ann}_{D_R[s]}(\boldsymbol{f^s}))$. We also point out that $\ker
\varphi$ is a prime ideal since ${\bf R}(I) \subset R[t]$ is a domain.

\begin{proposition}\label{Lf}
Let $(D,O)$ be a germ of a divisor of expected Jacobian type, with
reduced equation given by $f\in R$. Let $\ell \in \bZ_{\geq 0}$ be the
smallest non-negative integer such that $(\ker \varphi)^\ell$ is
contained in $\sigma^T({\rm Ann}_{D_R[s]}(\boldsymbol{f^s}))$. Then
\begin{eqnarray}\label{eqKashi}
  \rt(I)\leq L(f)\leq\ell\cdot\rt(I).
\end{eqnarray}
In particular, if $\sigma^T({\rm
  Ann}_{D_R[s]}(\boldsymbol{f^s}))=\ker\varphi$, then $\rt(I)=L(f)$.
\end{proposition}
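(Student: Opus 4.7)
My plan is to prove each inequality in \eqref{eqKashi} separately. The lower bound $\rt(I)\leq L(f)$ will follow from Yano's inequality \eqref{eqYano} together with the expected-Jacobian-type hypothesis. For the upper bound I will first construct an equation of the form \eqref{topL} in degree $\rt(I)$ (without invoking Theorem~\ref{thmEqAlmostT0}\,(e), whose hypotheses are not assumed here), take its $\ell$-th power, and then lift the resulting homogeneous element of $\sigma^T(\mathrm{Ann}_{D_R[s]}(\boldsymbol{f^s}))$ to a Kashiwara-style operator via the standard correspondence between symbols and operators of a prescribed total order. The ``in particular'' statement is then the case $\ell=1$.

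For the lower bound, Yano's inequality \eqref{eqYano} supplies $id(f)\leq L(f)$. By definition $id(f)=\rn_J(I)+1$, and by the expected-Jacobian-type hypothesis $\rn_J(I)+1=\rt(I)$; chaining these gives $\rt(I)\leq L(f)$.

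For the upper bound, set $L=\rt(I)=\rn_J(I)+1$, so $f^L\in I^L=JI^{L-1}$. Writing $f^L=\sum_{j=1}^n f_j g_j$ with $g_j\in I^{L-1}$ and lifting each $g_j$ to a form $H_j\in R[\xi_1,\ldots,\xi_n,s]_{L-1}$ with $H_j(f_1,\ldots,f_n,f)=g_j$, I would obtain
\[
Q(\xi,s):=s^L-\sum_{j=1}^n \xi_j\, H_j(\xi,s)\in\ker\varphi,
\]
a homogeneous equation of degree $L$. Collecting by powers of $s$, this can be rewritten as $Q=s^L+p_1s^{L-1}+\cdots+p_L$ where each $p_i\in R[\xi_1,\ldots,\xi_n]$ is either zero or homogeneous of degree $i$; in other words, $Q$ has the shape \eqref{topL}. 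The key move is then to raise $Q$ to the $\ell$-th power: $Q^\ell\in(\ker\varphi)^\ell\subseteq\sigma^T(\mathrm{Ann}_{D_R[s]}(\boldsymbol{f^s}))$ is homogeneous of total degree $\ell L$ and its $s^{\ell L}$-coefficient is $1$. Using that $\sigma^T(\mathrm{Ann}_{D_R[s]}(\boldsymbol{f^s}))$ coincides with the associated graded of the total-order filtration on $\mathrm{Ann}_{D_R[s]}(\boldsymbol{f^s})$, every homogeneous element of degree $d$ lifts to an operator of total order exactly $d$ with the prescribed total symbol. Applied to $Q^\ell$, this produces $A=s^{\ell L}+A_1s^{\ell L-1}+\cdots+A_{\ell L}\in\mathrm{Ann}_{D_R[s]}(\boldsymbol{f^s})$ with $A_i\in D_R$; matching total orders forces $\mathrm{ord}(A_i)\leq i$, so $A$ is a Kashiwara-style operator of degree $\ell L$ and $L(f)\leq\ell\cdot\rt(I)$.

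The delicate point is the lifting step: one must be confident that $\sigma^T(\mathrm{Ann}_{D_R[s]}(\boldsymbol{f^s}))$ agrees with the graded ideal $\bigoplus_d(\mathrm{Ann}\cap F_d^T)/(\mathrm{Ann}\cap F_{d-1}^T)$, so that a homogeneous representative in the symbol ideal is genuinely the total symbol of some operator of the same total order in $\mathrm{Ann}_{D_R[s]}(\boldsymbol{f^s})$. Once this is in place, the monic leading term $s^{\ell L}$ of $Q^\ell$ forces the shape $A=s^{\ell L}+\cdots$ with $\mathrm{ord}(A_i)\leq i$ automatically, and the rest of the argument is essentially bookkeeping.
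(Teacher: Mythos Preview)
Your argument is correct and follows essentially the same route as the paper: the lower bound via Yano's inequality \eqref{eqYano} combined with $\rt(I)=\rn_J(I)+1=id(f)$, and the upper bound by exhibiting the monic equation of degree $\rt(I)$ in $\ker\varphi$, raising it to the $\ell$-th power, and reading off a Kashiwara-style operator. Your explicit construction of the equation $Q$ from $f^L\in JI^{L-1}$ is exactly what underlies the paper's terse assertion that such an element exists, and you are right not to invoke Theorem~\ref{thmEqAlmostT0}\,(e), whose hypotheses $T_{i,d}=0$ are not assumed here. The only place where you add something the paper leaves implicit is the lifting step: the paper simply asserts that from $Q^\ell\in\sigma^T(\mathrm{Ann}_{D_R[s]}(\boldsymbol{f^s}))$ one obtains a Kashiwara operator of the required degree, whereas you spell out why a homogeneous element of the symbol ideal lifts to an operator of the same total order, which is precisely the identification of $\sigma^T(\mathrm{Ann}_{D_R[s]}(\boldsymbol{f^s}))$ with the associated graded ideal.
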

\begin{proof}
It is readily seen that $id(f)=\rn_J(I)+1$. Moreover, since $f$ is of
expected Jacobian type, then $\rt(I)=\rn_J(I)+1$, and so,
$\rt(I)=id(f)\leq L(f)$ (see \eqref{eqYano}). Furthermore, the ideal
of equations of the Rees algebra of $I$ contains an element of the
form
\begin{eqnarray*}
s^{r+1}+p_1s^{r}+\cdots +p_{r+1}\in\ker\varphi,
\end{eqnarray*}
where $r+1=\rt(I)=id(f)$, and where each $p_j$ is either zero, or else
a polynomial of degree $j$. Therefore, we have that
\begin{eqnarray*}
(s^{r+1}+p_1s^{r}+\cdots +p_{r+1})^\ell\in(\ker\varphi)^\ell\subseteq
  \sigma^T({\rm Ann}_{D_R[s]}(\boldsymbol{f^s})),
\end{eqnarray*}  
so there exists a Kashiwara operator of degree at most
$\ell\cdot\rt(I)$.
\end{proof}

The upper bound in \eqref{eqKashi} is far from being sharp as we will
see in the examples of Section~\ref{SecExamples}. The issue here is
how to lift an equation of the Rees algebra to a differential operator
that annihilates $\boldsymbol{f^s}$. We would like to mention that
necessary conditions for the existence of such a lifting were already
given in \cite{Yan78}.


\section{Examples}\label{SecExamples}

Let $R=\bC\{x_1,\dots , x_n\}$ be the ring of convergent series with
coefficients in $\bC$ and, for a given $f\in\mfm$, let $I=(f_1,\ldots,
f_n,f)$ and $J=(f_1,\ldots, f_n)$ be the Jacobian and gradient ideal
of $f$. We know that $J$ is a reduction of $I$. If moreover $J$ is
generated by a regular sequence, then $J$ is of linear type (see
Setting~\ref{settwo} and Example~\ref{exIsolated}).

\vskip 2mm

The aim of this section is to illustrate with some examples the
condition of having the expected relation type and compare the
relation type of $I$ with the invariant $L(f)$. In order to do so we
will use the mathematical software packages {\tt Macaulay2}
\cite{GS}, {\tt Magma} \cite{Magma}, and {\tt Singular} \cite{DGPS},
to test the sufficient conditions
\begin{eqnarray*}
  T_{i,d}=(J_{i-1}I^{d-1}:f_i)\cap I^{d-1}/J_{i-1} I^{d-2}=0
\end{eqnarray*}  
considered in Theorem~\ref{thmEqAlmostT0}. Of course, $T_{1,d}=0$ is
always satisfied since $f_1$ is a nonzero divisor.  In the case of plane curves  we will only have to check whether the  conditions
$T_{2,d}=0$ hold for $d\geq 2$.

\vskip 2mm

\noindent {\bf Warning}: Notice that we have to deal with an
infinite set of conditions, namely, the vanishing of the modules
$T_{i,d}$. Up to now, we do not know how to solve this difficulty. 
In the examples we present we could compute $T_{i,d}$ for all values of $d$ up to a positive integer much
larger than the reduction number of $I$. This suggests that the examples we deal with 
are of the expected Jacobian type but, in no case, our computations should be considered as formal
proofs.  We should also point out that the calculations of these colon ideals tend to be
extremely costly from the computational point of view.


\vskip 2mm

When computing the invariant $L(f)$ we will use the {\tt Kashiwara.m2}
package \cite{BL}.

\subsection{Some examples of plane curves}

Let $f\in R=\bC\{x_1,x_2\}$ be the equation of a germ of plane curve.
It is proved in \cite[Proposition 2.3.1]{Nar08} that $f$ is a divisor
of linear Jacobian type if and only if $f$ is quasi-homogeneous, so
this is a very restrictive assumption.  Indeed, for irreducible plane
curves, this corresponds to the case where $f=x^a+y^b$, with
$\gcd(a,b)=1$, a particular case of irreducible curve with one
characteristic exponent. It is well known that the Bernstein-Sato
polynomial varies within a deformation with constant Milnor number. We
are going to test the behaviour of the expected relation type property
with some examples  of irreducible plane curves with an isolated singularity at the origin.

\vskip 2mm 

$\bullet$ {\bf Reiffen curves:} We consider $f=x^a+y^b+xy^{b-1}$ with
$b\geq a+1$, $a\geq 4$. This family of irreducible plane curves with one characteristic exponent has been a recurrent example in the
theory of $D$-modules. In the case  $a=4$, Nakamura \cite{Nak08, Nak16}  gave a a description of ${\rm
  Ann}_{D_R[s]}(\boldsymbol{f^s})$ and showed that $L(f)=2$.

\vskip 2mm 

We have tested many examples of Reiffen curves varying the values of $a$ and $b$. In all the cases we checked
that $T_{2,d}=0$, for all
$d\geq 2$.  Thus, by e.g.,
Corollary~\ref{cormp}, $I$ has the expected relation type with respect
to $J$, in particular, $f$ is a divisor of expected Jacobian type (see
Definition~\ref{defAlmost}). By Corollary~\ref{corEquiv} and Remark~\ref{remregseq}, we have 
$E(I)_d\cong (J I^{d-1}: f^d)/(JI^{d-2}:f^{d-1})$ and, further computations suggest that 
$(JI^{d-1}:f^d)=((b-1)x+by,y^{a-1-2d})$, for all $d \leq \lfloor a/2 \rfloor -1$, and
$(JI^{d-1}:f^d)=R$, for $d\geq \lfloor a/2 \rfloor$. In particular,
$\rt(I)=\rn_J(I)+1=\lfloor a/2 \rfloor $.


\vskip 2mm 

$\bullet$ {\bf Irreducible curves with one characteristic exponent:}
More generally we consider deformations with constant Milnor number of
irreducible curves with one characteristic exponent which have the
form
$$f=x^a+y^b - \sum t_{i,j} \hskip 1mm x^i y^j,$$ where gcd$(a,b)=1$ and the sum is
taken over the monomials $x^i y^j$ such that $0\leq i \leq a-2$,
$0\leq j \leq b-2$ and $bi + aj > ab$. It is well known that these
curves belong to the same equisingularity class but their analytic
type varies depending of the parameters $t_{i,j}$. In particular, the
Bernstein-Sato polynomial also varies and there exists a
stratification of the space of parameters with a specific
Bernstein-Sato polynomial at each strata (see \cite{Kat81, Kat82} and
\cite{CN}).

\vskip 2mm 


We take for example the following case considered by Kato
\cite{Kat81}

\vskip 2mm 

$\cdot$ Let $f=x^7+y^5 - t_{3,3} x^3y^3 - t_{5,2} x^5y^2 - t_{4,3}
x^4y^3 - t_{5,3} x^5y^3$. The stratification given by the
Bernstein-Sato polynomial with its corresponding $L(f)$ invariant is:

\vskip 2mm 

\hskip 1cm $\{ t_{5,2} \neq 0 , 6t_{5,2} +175t_{3,3}^4=0 \}$. We have
$L(f)=2$.

\hskip 1cm $\{ t_{5,2} \neq 0 , 6t_{5,2} +175t_{3,3}^4\neq 0 \}$. We
have $L(f)=3$.

\hskip 1cm $\{ t_{3,3} = 0 , t_{5,2}t_{4,3}\neq 0 \}$. We have
$L(f)=3$.

\hskip 1cm $\{ t_{3,3} = 0 , t_{5,2}\neq 0, t_{4,3}= 0 \}$. We have
$L(f)=2$.

\hskip 1cm $\{ t_{3,3} = 0 , t_{5,2}=0, t_{4,3}\neq 0 \}$. We have
$L(f)=2$.

\hskip 1cm $\{ t_{3,3} = 0 , t_{5,2}=0 , t_{4,3}= 0, t_{5,3}\neq 0
\}$. We have $L(f)=2$.

\hskip 1cm $\{ t_{3,3} = 0 , t_{5,2}=0 , t_{4,3}= 0, t_{5,3}= 0
\}$. We have $L(f)=1$.

\vskip 2mm 

For any representative in each strata that we considered we checked
out that  $T_{2,d}=0$  for all the
values of $d$ that we could compute. Moreover, the relation type is
always $\rt(I)=2$ except for the last case which obviously
corresponds to the homogeneous case. In particular, there are strata
in which we have an strict inequality $\rt(I) < L(f)$.

\vskip 2mm 

We have tested several other examples of irreducible plane curves with
one characteristic exponent and all of them satisfied the conditions
$T_{2,d}=0$. This suggests that this class of plane curves have
the expected relation type and we can describe the module of effective
relations using Corollary~\ref{cormp}.

\vskip 2mm

$\bullet$ {\bf Irreducible curves with two characteristic exponents:}
We start considering the simplest example of such a plane curve which
is:

\vskip 2mm 

$\cdot$ Let $f=(y^2-x^3)^2 -x^5y$. We have $L(f)=2$.  We checked out
that condition $T_{2,d}=0$ is satisfied for all the values of
$d$ that we could compute and that the relation type is $\rt(I)=2$ so
it seems to have the expected relation type.

\vskip 2mm

However we can find examples with two characteristic exponents not
satisfying  $T_{2,d}=0$. For example,

\vskip 2mm

$\cdot$ Let $f=(y^2-x^3)^5 -x^3y^{10}$. The reduction number of $I$ with respect to $J$ is $\rn_J(I)=4$. We checked that $T_{2,2}=0$, $T_{2,3} \neq 0$, $T_{2,4}\neq 0$ and $T_{2,5}\neq 0$  but $T_{2,d}=0$  for all the values $d\geq 6$ that we could compute. According to Corollary~\ref{corkey} we have
\begin{eqnarray*}
0\to \frac{(f_1I^{d-1}:f_{2})\cap I^{d-1}}{f\cdot
  [(f_1I^{d-2}:f_2)\cap I^{d-2}]+f_1I^{d-2}}\longrightarrow
E(I)_d\longrightarrow\frac{(J I^{d-1}: f^d)}{(J I^{d-2}: f^{d-1})}\to 0.
\end{eqnarray*} 
and the right term is zero for $d\geq 6$. Even though the conditions $T_{2,d}=0$ are not satisfied for all $d\geq 2$ we have that the left term of the short exact sequence is zero  for $d\geq 6$. Therefore the relation type is $\rt(I)=5$ so
$f$ has the expected relation type. Notice that the modules of effective relations are not
as easy to describe as in Corollary \ref{corEquiv}.



\vskip 2mm

Our computer runs out
of memory before computing the invariant $L(f)$.

\vskip 2mm 

\subsection{Some examples which have not the expected relation type}

Narv\'aez-Macarro \cite{Nar08} considered some examples of
non-isolated singularities which are not of linear Jacobian type. We
will revisit them from our own perspective. We point out that these
examples satisfy $J=I$ so the effective relations for $d\geq 2$ are
characterized by Corollary~\ref{cortn1}. All these examples satisfy
$L(f)=1$ but the relation type is strictly bigger than one.

\vskip 2mm 

$\cdot$ $f=xy(x+y)(x+yz)$. 

\vskip 2mm 

We have that $T_{2,d}=0$  for all the values of $d$ that we could compute.
On the other hand, $T_{3,2}\neq 0$  but $T_{3,d}=0$  for
all the values of $d\geq 3$ that we could compute.  Notice that we are in the situation where $$E(I)_d \cong
\frac{H_1(f_1t, f_{2} t, f_{3} t; {\bf R}(I) )_{d}}{f t H_1(f_1t,
  f_{2} t, f_{3} t; {\bf R}(I) )_{d-1}},$$ but in this case $E(I)_d=0$ for all
$d\geq 3$, i.e. $\rt(I)=2$, as it was described in \cite{Nar08}.


\vskip 2mm

$\cdot$ $f=(xz+y)(x^k-y^k)$. 

\vskip 2mm 

We have that $T_{2,d}=0$  for all the values of $d$ that we could
compute. However:

\vskip 2mm

\hskip 1cm $ \cdot \hskip 2mm k=4: $ $T_{3,2} \neq 0$, but
$T_{3,d}=0$  for $d\geq 3$. This suggests that $\rt(I)=2$.

\hskip 1cm $ \cdot \hskip 2mm k=7: $ $T_{3,2} \neq 0$, $T_{3,3}\neq 0$ and
$T_{3,4} \neq 0$  but $T_{3,d}=0$  for $d\geq 5$. Thus
$\rt(I)=4$.

\vskip 2mm Here we also have $$E(I)_d \cong
\frac{H_1(f_1t, f_{2} t, f_{3} t; {\bf R}(I) )_{d}}{f t H_1(f_1t,
  f_{2} t, f_{3} t; {\bf R}(I) )_{d-1}}.$$ For $k=4$ we have
$H_1(f_1t, f_{2} t, f_{3} t; {\bf R}(I) )_{d}=0$ if $d\geq 3$ and, in
the case that $k=7$, we have $H_1(f_1t, f_{2} t, f_{3} t; {\bf R}(I)
)_{d}=0$ if $d\geq 5$. This also follows from the computations done in \cite{Nar08}.



\end{document}